\newtheorem{thm}{Theorem}[section]
\newtheorem{lem}[thm]{Lemma}
\newtheorem{cor}[thm]{Corollary}
\newtheorem{rem}[thm]{Remark}}
\newtheorem{exam}{Example}[section]}
\newcommand{\ra}{\rightarrow}
\def\R{\mathbb R}
\def\F{\mathscr F}
\def\d{\text{\rm{d}}}
\def\E{\mathbb E}
\def\p{\mathbb P}\def\e{\text{\rm{e}}}
\def\La{\Lambda}
\def\S{\mathcal M}
\def\al{\alpha}
\newcommand{\fin}{\hspace*{\fill}\rule{0.3em}{1ex}}
\newenvironment{proof}{{\bf \noindent Proof.}}{\fin}
\numberwithin{equation}{section}
\begin{document}

\title{Asymptotic Behavior of SIRS Models in State-dependent Random Environments 
}

\author{
{   Jianhai Bao$^{b)}$,  Jinghai Shao$^{a)}$}\\
\footnotesize{$^{a)}$ Center for Applied Mathematics, Tianjin
University, Tianjin 300072, China}\\
\footnotesize{$^{b)}$Department of Mathematics, Swansea University,
Singleton Park, SA2 8PP, UK}\\
 \footnotesize{jianhaibao@csu.edu.cn,
shaojh@tju.edu.cn}} \maketitle

\begin{abstract}
The extinction and persistence of infective individuals are closely related to the random change of the environment. In this paper, via the random/stochastic SIRS models, we analyze qualitatively and quantitatively  the impact caused by the random change of the environment. Our contributions consist in (i) giving some sufficient  conditions on
extinction (persistence) of the infectious individuals even though
they are persistent (resp. extinct) in certain fixed environments;
(ii) revealing the influence of  random switching of incidence
functions on extinction for the infectious individuals,
which has not been studied before; (iii)
establishing a criterion to judge extinction of the infectious
individuals for a range of random/stochastic SIRS models with
state-dependent switching via a stochastic comparison  for
functionals of jump processes. Moreover, some
 examples are set to illustrate the applications of our theory.
\end{abstract}

\noindent AMS subject Classification:\ 60J60, 65J05, 60H35

\noindent\textbf{Key Words}: regime-switching,  extinction,
persistence, state-dependence, stochastic comparison

\section{Introduction}

Let $S_t,I_t,R_t$ be the number of susceptible individuals,
infective individuals, and  removed individuals at time $t$, and
$N_t=S_t+I_t+R_t$ be the totality of the population. Assume that
infectious disease can cause additional mortality, and that an
infectious individual can recover with a loss of immunity. Since the
pioneer work due to Kermack-McKendrick \cite{KM}, the SIRS model has
been extensively investigated on, e.g., stability, extinction,
persistence, Hopf bifurcation, to name a few. Different diseases
have been discovered to be described via different incidence
functions. So numerous variants of incidence rate functions have
been put froward to fit well in the practical situation;  see, e.g.,
\cite{CS,DD,HD,KM,LLI,Tang}. In order to incorporate the effect of
behavioral changes and prevent unbounded contact rates, \cite{LLI}
consider an SIRS model with a nonlinear incidence rate function in
the form
\begin{equation}\label{general}
\left\{ \begin{split}
  \frac{\d S_t}{\d t}&=\Lambda -\mu S_t- S_tG(I_t)+\gamma R_t,\\
  \frac{\d I_t}{\d t}&=S_tG(I_t)-(\mu+\nu+\delta)I_t,\\
  \frac{\d R_t}{\d t}&=\nu I_t-(\mu+\gamma)R_t.
\end{split}\right.
\end{equation}
The precise interpretations on the parameters in \eqref{general} are
presented as follows: $\Lambda>0$ means the recruitment rate of the
population; $\mu>0$ denotes the natural death rate of the
population; $\delta$ stands for the disease inducing death rate;
$\gamma>0$ signifies the rate at which recovered individuals lose
immunity and return to the susceptible class; $\nu>0$ stipulates the
natural recovery rate of the infectious individuals; $SG(\cdot)$
($G:\R\rightarrow\R_+$) manifests the incidence rate per infective
individual. In particular, \cite{LLI} initiated a nonlinear
incidence function in the form
\begin{equation}\label{3.1}
G(x)=\frac{\beta x^\ell }{1+a x^h},~~~~x>0,
\end{equation}
where $\beta,\ell, h>0$ and $a\ge0,$ $\beta x^\ell$ measures the
infection force of the disease and $1/(1+ax^h)$ represents the
inhibition effect from the behavioral change of the susceptible
individuals when the number of infectious individuals increases. In
\eqref{3.1}, by taking $\ell=1$ and $a=0,$ \eqref{general} goes back
to an SIRS model with bilinear incidence rates (see e.g.
\cite{He,KM}).  \eqref{general} is said  to be the SIRS model with
unbounded incidence function for $\ell>h$, saturated incidence
function for $\ell=h$, and nonmonotone incidence function
 for $\ell<h$, respectively; see  e.g.  \cite{HD,LLI,NK,Tang,XR} and
references within.

The deterministic SIRS models \eqref{general} have been extended  in
several different ways into stochastic or random counterparts. One
of them is to perturb the deterministic models by white noises, see,
for instance, \cite{Cai15,LO,NK,TBV,ZJ}  upon  asymptotic analysis.
Whereas,
 with regard to  deterministic SIRS models or
  stochastic counterparts perturbed by white noises,  the environment   is
assumed to be constant. As we know, the evolution of the diseases
may heavily depend on the environment conditions such as
temperature, humidity, etc. 
 So, in practical situations, it is prerequisite
to take   the random changes of environmental conditions and their
effects upon the spread of the disease into account, where one of
natural and important questions is   to justify the persistence or
extinction of the disease. So, another extension of deterministic
SIRS models is to perturb via the telegraph noises, which is, in
general, called SIRS models with Markov switching or in random
environments; see  e.g.
 \cite{GGMP,GLM,HD}. For
population dynamical systems   in random environments,   we refer to
e.g.  \cite{BS16,DDY,DDD,LY}.

In the present work we are interested in  three kinds of SIRS models
below.

 \noindent {{\bf Model I:}} Taking impacts of the random environments into
consideration, we first consider the following state-independent
regime-switching SIRS model:
\begin{equation}\label{1.1}
\left\{\begin{split}  \frac{\d}{\d t} S_t &=
\La_{\alpha_t}-\mu_{\al_t}S_t-
  G(I_t,\al_t)S_t+\gamma_{\al_t}R_t \\
 \frac{\d }{\d t}I_t &= G(I_t,\al_t)S_t-(\mu_{\al_t}+\nu_{\al_t}+\delta_{\al_t})I_t \\
 \frac{\d }{\d t}R_t &= \nu_{\al_t}
 I_t-(\mu_{\al_t}+\gamma_{\al_t})R_t
\end{split}\right.
\end{equation}
with the initial datum
$(S_0,I_0,R_0)=(s_0,i_0,r_0)\in\R_+^3:=\{(x,y,z)\in\R^3:
x>0,y>0,z>0\}$ and $\al_0=a_0\in\S=\{1,2,\ldots,M\}$ for some
integer $M<\infty$. Herein,    $(\alpha_t)_{t\ge0}$ is a
continuous-time Markov chain with the state space $\S$ and the
transition probability specified by
\begin{equation}\label{v4}
\mathbb{P}(\al_{t+\triangle}=j|\al_t=i)=
\begin{cases}
q_{ij}\triangle+o(\triangle),~~~~~~~~i\neq j\\
1+q_{ii}\triangle+o(\triangle),~~~i=j
\end{cases}
\end{equation}
provided $\triangle\downarrow0$   and  inducing the $Q$-matrix
$Q=(q_{ij})_{i,j\in\S}$; $G:\R\times\S\rightarrow\R_+$ is continuous
w.r.t. the first variable; $\mu_i,\, \,\gamma_i,\,\delta_i,\nu_i$,
$i\in\S$, are positive constants, whose precise implications are
explicated as in \eqref{general}.

\noindent{{\bf Model II:}} We are still interested in \eqref{1.1},
whereas $(\al_t)_{t\ge0}$ is a jump process with the state space
$\S$ and the transition kernel stipulated as, for any $i,j\in\S$ and
$ x\in\R_+^3$,
\begin{equation}\label{jump}
\p(\al_{t+\Delta}=j|\al_{t}=i,X_t=x)=\begin{cases}
  q_{ij}(x)\Delta+o(\Delta), &i\neq j\\
  1+q_{ii}(x)\Delta+o(\Delta),&i=j
\end{cases}
\end{equation}
whenever $\Delta\downarrow0$, where $X_t=(S_t,I_t,R_t)\in \R_+^3$.

\noindent{\bf Model III:} Taking the influences  of   the
state-dependent random environments and  stochastic perturbations
into account, we focus on the
  following SIRS model
\begin{equation}\label{se3} \left\{\begin{split}
  \d S_t &=\Big(\La_{\alpha_t}-\mu_{\al_t}S_t-
 \frac{\beta_{\al_t}I_t}{f(I_t)}S_t+\gamma_{\al_t}R_t\Big)\d t- \mu_{\al_t}^eS_t\d B_t^{(1)}- \frac{\beta_{\al_t}^eI_t}{f(I_t)}S_t\d B_t^{(2)}\\
  \d I_t&=\Big(\frac{\beta_{\al_t}^eS_t}{f(I_t)}-(\mu_{\al_t}+\nu_{\al_t}+\delta_{\al_t})\Big)I_t\d t-\mu_{\al_t}^eI_t\d B_t^{(1)}+
  \frac{\beta_{\al_t}^eS_t}{f(I_t)}I_t\d B_t^{(2)}\\
 \d R_t &=(\nu_{\al_t}
  I_t-(\mu_{\al_t}+\gamma_{\al_t})R_t)\d t-\mu_{\al_t}^eR_t\d B_t^{(1)}
\end{split}\right.
\end{equation}
with the initial datum $(S_0,I_0,R_0)=(s_0,i_0,r_0)\in\R_+^3$ and
$\al_0=a_0\in\S$. Herein, $ \mu_i^e,\beta_i^e\ge0;$
$f:\R_+\rightarrow\R_+$ satisfies ({\bf A3}) below;
$B_t=(B_t^{(1)},B_t^{(2)})$ is a $2$-dimensional Brownian motion
defined on probability space
$(\Omega,\F,(\F_t)_{t\ge0},\mathbb{P})$, $(\al_t)_{t\ge0}$ is a
continuous time jump process determined by \eqref{jump}, and the
other quantities are defined exactly as in \eqref{1.1}.

Based on the three models above, in this work we aim  to
\begin{enumerate}
\item[({\bf i})] provide some sufficient  conditions to guarantee
the
extinction (persistence) of the infectious individuals even though
they are persistent (resp. extinct) in certain fixed environments;
\item[({\bf ii})]
illustrate  the impacts of  {\it random switching of incidence
functions} on extinction for the infectious individuals;
\item[({\bf iii})]
  establish  a criterion to judge extinction of the infectious
individuals for random/stochastic SIRS models with {\it
state-dependent regime switching.}
\end{enumerate}

Now we make the following remarks:
\begin{enumerate}
\item[({\bf1})]
The SIRS model \eqref{1.1} and \eqref{v4} enjoys the following
features: (i) It owns the incidence functions of one kind (e.g.
$G(x,i)= \beta_ix/f(x)$), which however takes different values in
different environments; (ii) It allows the incidence functions
(e.g., $G(x,i)=\frac{\beta_i x^i}{1+ax^2}$) to be distinctive in
different environments. See  Theorems \ref{t3.1} and \ref{t2.3},
corollaries \ref{co1}, \ref{co}, \ref{co2}, and Examples
\ref{exam1}, \ref{exam2}, and \ref{ex2} for more details.

\item[({\bf2})] Compared with the SIRS model \eqref{1.1}
and \eqref{v4}, there are essential challenges to cope with the
model \eqref{1.1} and \eqref{jump}. For this setup, one of the
challenges is that the classical ergodic theorem for continuous-time
Markov chains does not work any more due to the fact that
$(\al_t)_{t\ge0}$ is merely a jump process rather than a Markov
process. To get over such difficulty, we adopt a stochastic
comparison approach (see Lemma \ref{lm} for further details) for
functionals of the jump process $(\alpha_t)_{t\ge0}$. Moreover, we
  provide  explicit criteria
on the extinction/persistence of the infectious individuals; see
Theorems \ref{state} and \ref{th} and Examples \ref{exam3} and
\ref{exam4}.

\item[({\bf3})]  Since the
totality $N_t=S_t+I_t+R_t$ is variable, the approaches adopted to
cope with \eqref{1.1} and \eqref{v4} (or \eqref{jump}) is
unavailable for the model \eqref{se3} and \eqref{jump}. So some
tricks need to be put forward to investigate extinction of the
infectious individuals; see Theorem \ref{sto} for further details.
\end{enumerate}

The content of this paper is arranged as follows: Section \ref{sec2}
is concerned with impacts of state-independent random environments
on
    existence and
persistence for the infectious individuals solved by \eqref{1.1} and
\eqref{v4}; Section \ref{sec4} focuses on the influence of
state-dependent random environments  upon extinction/persistence of
the infectious individuals determined by \eqref{1.1} and
\eqref{jump}; Section \ref{sec3} is devoted to extending the random
SIRS model \eqref{1.1} and \eqref{v4} (or \eqref{jump}) into the
stochastic counterpart \eqref{se3} and \eqref{jump} and
 providing some sufficient conditions to justify
extinction of the infectious individuals.

\section{Impacts of state-independent  random environments}\label{sec2}
In the SIRS model \eqref{1.1} and \eqref{v4}, the transition rates
of the continuous time Markov chain $(\al_t)_{t\ge0}$  is
state-independent. For related analysis of
stochastic systems with state-independent random environments,  we
refer to e.g. \cite{CS,DDY,DDD,GGMP,HD,LLC,LY,MY,Tang} and
references therein. 

Let $\check{\La}=\max_{i\in\S} \La_i$ and $\hat\La=\min_{i\in\S}
\La_i$.   The other quantities $\check{\mu}$, $\hat\mu$,
$\check{\beta}$, $\hat\beta$, $\cdots$, are defined analogously.
Assume that
\begin{enumerate}
\item[({\bf A1})] For each $i\in\S$, $G(\cdot,i):\R\rightarrow\R_+$
is locally Lipschtz continuous and that  there exists a constant
$c>0$ such that $ G(x,i)\le c\,(1+|x|),x\in\R;$

\item[({\bf A2})] The continuous-time Markov chain $(\al_t)$
 is irreducible and positive
recurrent with the invariant probability measure
$\pi=(\pi_1,\cdots,\pi_M).$

\end{enumerate}

\begin{rem}\label{re1}
It is easy to check that the linear incidence rate (i.e.,
$G(x,i)=\beta_i x$), the saturated incidence rate (i.e.,
$G(x,i)=\frac{\beta_i x^\ell}{1+ax^\ell}, \ell>0$), the nonmonotone
incidence rate (i.e., $G(x,i)=\frac{\beta_i x^\ell}{1+ax^h}, 0\le
\ell<h)$, 
and the ``media
coverage'' incidence rate (e.g., $G(x,i)=\beta_ix\e^{-\alpha x},
\alpha>0$) fulfill the assumption ({\bf A1}) above.
\end{rem}

The lemma below shows that the unique solution to \eqref{1.1} and
\eqref{v4}  lies in the positive quadrant and implies that the
totality of the population (i.e., $N_t$) has an upper bound.

\begin{lem}\label{t2.1}
Assume that $({\bf A1})$ holds. Then, \eqref{1.1} and \eqref{v4}
  has a unique strong solution
$(S_t,I_t,R_t)\in\R_+^3$  with   the initial value
$(s_0,i_0,r_0)\in\R^3_+$. Moreover,
\begin{equation}\label{2.2}
    N_t\leq N_0\e^{-\int_0^t\mu_{\al_s}\d s}+\int_0^t\La_{\al_s}\e^{-\int_s^t\mu_{\al_r}\d r}\d s,\quad
    a.s.
\end{equation}
\end{lem}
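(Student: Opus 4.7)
The plan is to build the solution piecewise on intervals between successive jump times of the Markov chain $(\al_t)$, establish strict positivity of each coordinate by integrating-factor arguments, and derive \eqref{2.2} from a scalar comparison for the totality $N_t$, which simultaneously rules out blow-up. Since $\S$ is finite, the jump times $0=\tau_0<\tau_1<\tau_2<\cdots$ of $(\al_t)$ are a.s.\ locally finite, and on each interval $[\tau_k,\tau_{k+1})$ the chain is frozen at some $i\in\S$, so \eqref{1.1} reduces to a classical ODE whose right-hand side is locally Lipschitz in $(S,I,R)$ by $({\bf A1})$. This furnishes a unique local solution starting from $(S_{\tau_k},I_{\tau_k},R_{\tau_k})$.

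Next I would verify that the solution remains in $\R_+^3$. Rewriting the first equation of \eqref{1.1} as $\frac{\d S_t}{\d t}+(\mu_{\al_t}+G(I_t,\al_t))S_t=\La_{\al_t}+\gamma_{\al_t}R_t\ge\hat{\La}>0$ and applying the integrating factor $\exp\bigl(\int_0^t(\mu_{\al_s}+G(I_s,\al_s))\,\d s\bigr)$ yields $S_t\ge s_0\exp\bigl(-\int_0^t(\mu_{\al_s}+G(I_s,\al_s))\,\d s\bigr)>0$ on the existence interval. For the infective compartment, $S_tG(I_t,\al_t)\ge 0$ implies $\frac{\d I_t}{\d t}\ge-(\mu_{\al_t}+\nu_{\al_t}+\delta_{\al_t})I_t$, whence $I_t\ge i_0\exp\bigl(-\int_0^t(\mu_{\al_s}+\nu_{\al_s}+\delta_{\al_s})\,\d s\bigr)>0$, and an analogous computation using $\nu_{\al_t}I_t\ge 0$ gives $R_t>0$.

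Summing the three equations of \eqref{1.1} produces
\[
\frac{\d N_t}{\d t}=\La_{\al_t}-\mu_{\al_t}N_t-\delta_{\al_t}I_t\le\La_{\al_t}-\mu_{\al_t}N_t,
\]
and multiplying by $\exp\bigl(\int_0^t\mu_{\al_r}\,\d r\bigr)$ and integrating over $[0,t]$ delivers \eqref{2.2} directly. Since the right-hand side of \eqref{2.2} is bounded by $N_0+\check{\La}/\hat{\mu}$ on $[0,\infty)$, the local solution cannot explode before $\tau_{k+1}$ (coupled with $({\bf A1})$, the linear growth of $G$ combined with the uniform bound on $N_t$ also keeps the integrating factor controlling $S_t$ locally finite), so concatenation across all jump times extends the solution to $[0,\infty)$.

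The only genuinely delicate point is bookkeeping across the jump times of $\al_t$: one must observe that $(S_t,I_t,R_t)$ is continuous at each $\tau_k$ (only the vector field switches, not the state), so the positivity estimates and the bound \eqref{2.2} automatically persist on each successive interval $[\tau_{k+1},\tau_{k+2})$ by reapplying the argument to the new initial value $(S_{\tau_{k+1}},I_{\tau_{k+1}},R_{\tau_{k+1}})\in\R_+^3$. No step requires machinery beyond classical ODE theory combined with the elementary integrating-factor manipulation.
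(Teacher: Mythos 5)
Your overall architecture coincides with the paper's: build the solution piecewise-deterministically on the intervals between the jump times of $(\al_t)$, use local Lipschitz continuity from $({\bf A1})$ for local existence and uniqueness on each frozen interval, and obtain \eqref{2.2} from $\frac{\d}{\d t}N_t=\La_{\al_t}-\mu_{\al_t}N_t-\delta_{\al_t}I_t\le \La_{\al_t}-\mu_{\al_t}N_t$ via an integrating factor (the paper performs the same comparison, only written interval by interval with an induction). Where you diverge is the positivity proof: the paper invokes a Lyapunov function of the form $V(x)=x_1-a-a\ln(x_1/a)+x_2-1-\ln x_2+x_3-1-\ln x_3$ on each frozen interval (as in Yang--Mao), while you argue componentwise with integrating factors. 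That more elementary route is workable, but as written it is circular: your lower bound for $S_t$ uses $\La_{\al_t}+\gamma_{\al_t}R_t\ge\hat\La>0$, which presupposes $R_t\ge 0$; your bound for $I_t$ uses $S_tG(I_t,\al_t)\ge 0$, which presupposes $S_t\ge 0$; and your bound for $R_t$ uses $\nu_{\al_t}I_t\ge 0$, which presupposes $I_t\ge 0$. Each component's nonnegativity is justified only by assuming another's, around a closed cycle, so none of the three steps can be carried out first.

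The gap is genuine but routine to close, and you should state the fix explicitly: on the maximal interval of existence set $\tau_0=\inf\{t:\ S_tI_tR_t=0\}$; by continuity and $(s_0,i_0,r_0)\in\R_+^3$ all three components are strictly positive on $[0,\tau_0)$, so on that interval your differential inequalities are legitimate and give, e.g., $S_t\ge s_0\exp\bigl(-\int_0^t(\mu_{\al_s}+G(I_s,\al_s))\,\d s\bigr)$, with analogous bounds for $I_t$ and $R_t$. If $\tau_0$ were smaller than the explosion time, letting $t\uparrow\tau_0$ and noting that the exponents are finite on bounded intervals (the solution is continuous there and $G$ is continuous, indeed of linear growth by $({\bf A1})$) would force all three components to be strictly positive at $\tau_0$, contradicting its definition; hence positivity holds up to the explosion time. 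Combined with \eqref{2.2}, which bounds $N_t$ by $N_0+\check\La/\hat\mu$ and therefore dominates each nonnegative component, this rules out explosion, and the remainder of your argument (continuity of the state at jump times and concatenation across the a.s.\ locally finite jumps) is correct and is exactly the paper's piecewise-deterministic scheme.
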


\begin{proof}
The existence and uniqueness of positive solutions to \eqref{1.1}
and \eqref{v4} is more or less standard via a piecewise
deterministic approach. Whereas, we herein provide a sketch of the
proof to make the content self-contained.

Denote $0=\tau_0<\tau_1<\tau_2<\cdots<\tau_n<\cdots$ by the
collection of all jump times of the  Markov chain $(\al_t)_{t\ge0}$.
For any $t\in[0,\tau_1) $, under the assumption ({\bf A1}),
\eqref{1.1} with $\al_t\equiv\al_0$ has a unique strong solution
$(S_t,I_t,R_t)\in\R^3_+$ by exploiting the Lyapunov function, for an
appropriate constant $a>0$,
\begin{equation*}
V(x)=x_1-a-a\ln (x_1/a)+x_2-1-\ln x_2+x_3-1-\ln
x_3>0,~~x=(x_1,x_2,x_3)\in\R^3_+
\end{equation*}
due to $y-1-\ln y>0$ for any $y>0.$ In detail, please refer to the
argument of e.g.
 \cite[Theorem
3.1]{YM}). Next, for any $t\in[\tau_1,\tau_2),$ under the assumption
({\bf A1}), \eqref{1.1} with $\al_t\equiv\alpha_{\tau_1}$ also
admits a unique positive solution by adopting the same test function
$V(x)$  above. Duplicating the previous procedure, we come to a
conclusion that \eqref{1.1} enjoys a unique positive solution as for
the initial value $(s_0,i_0,r_0)\in\R^3_+$.


Next, we aim  to  verify \eqref{2.2}.
 From  \eqref{1.1}, we arrive at
 \begin{equation}\label{eq8}
   \d N_t =\{\La_{\al_t}-\mu_{\al_t}N_t-\delta_{\al_t}I_t\}\d t,~~~~~t>0,
  \end{equation}
  which, along with $I_t\ge0 $, implies  that
  \begin{equation*}
 \d N_t\le \{\La_{\al_t}-\mu_{\al_t}N_t\}\d t.
  \end{equation*}
This enables particularly us to obtain that
\begin{equation*}
\d N_t\le \{\La_{\al_{\tau_k}}-\mu_{\al_{\tau_k}}N_t\}\d t,
~~~t\in[\tau_k,\tau_{k+1}),~~~k\in\mathbb{N}.
\end{equation*}
Subsequently,   the chain rule  yields inductively that
\begin{align*}
N_t&\le\e^{-\mu_{\al_{\tau_k}}(t-\tau_k)}N_{\tau_k}+\int_{\tau_k}^t\La_{\al_{\tau_k}}
\e^{-\mu_{\al_{\tau_k}}(t-s)}\d
s\\
&=\e^{-\int_{\tau_k}^t\mu_{\al_s}\d
s}N_{\tau_k}+\int_{\tau_k}^t\La_{\al_s}\e^{-\int_s^t\mu_{\al_u}\d
u}\d s\\
&\le\e^{-\int_{\tau_k}^t\mu_{\al_s}\d
s}\Big\{\e^{-\int_{\tau_{k-1}}^{\tau_k}\mu_{\al_s}\d
s}N_{\tau_{k-1}}+\int_{\tau_{k-1}}^{\tau_k}\La_{\al_s}\e^{-\int_s^{\tau_k}\mu_{\al_u}\d
u}\d s\Big\}+\int_{\tau_k}^t\La_{\al_s}\e^{-\int_s^t\mu_{\al_u}\d
u}\d s\\
&=\e^{-\int_{\tau_{k-1}}^t\mu_{\al_s}\d
s}N_{\tau_{k-1}}+\int_{\tau_{k-1}}^t\La_{\al_s}\e^{-\int_s^t\mu_{\al_u}\d
u}\d s\\
&\le\cdots\\
&\le\e^{-\int_0^t\mu_{\al_s}\d
s}N_0+\int_0^t\La_{\al_s}\e^{-\int_s^t\mu_{\al_u}\d u}\d s.
\end{align*}
Whence, \eqref{2.2} is now available.
\end{proof}

\begin{rem}\label{re2}
It seems that the assumption ({\bf A1}) excludes the setting on
unbounded incidence function. Concerning such setup, to verify the
positive property of the solutions to \eqref{1.1} and \eqref{v4}
$(\mbox{or } \eqref{jump})$, it is sufficient to follow  the
argument of Lemma \ref{t2.1} and combine with the cut-off approach.
So Lemma \ref{t2.1} still holds whenever the assumption ({\bf A1})
is replaced by ({\bf A1'}) below
\begin{enumerate}
\item[({\bf A1'})] For each $i\in\S$, $G(\cdot,i):\R\rightarrow\R_+$
is locally Lipschtz continuous and that  there exist  constants
$c,k>0$ such that $ G(x,i)\le c\,(1+|x|^k),x\in\R.$
\end{enumerate}

\end{rem}

As a byproduct of Lemma \ref{t2.1}, we derive that
\begin{cor}
Under the assumption $({\bf A1})$,  $(S_t,I_t,R_t,\al_t)_{t\ge0}$
admits an invariant probability measure.
\end{cor}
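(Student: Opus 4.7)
The plan is to apply the Krylov--Bogoliubov theorem to the joint Markov process $(X_t,\al_t)_{t\ge 0}$, with $X_t=(S_t,I_t,R_t)$, regarded as taking values in the locally compact Polish space $E:=\overline{\R}_+^3\times\S$. This requires two ingredients: (a) the Feller property of the semigroup $P_t f(x,i):=\E_{(x,i)}f(X_t,\al_t)$, and (b) tightness of the time-averaged occupation measures $\mu_T(\cdot):=\frac{1}{T}\int_0^T P_t((s_0,i_0,r_0,a_0),\cdot)\,\d t$.

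For tightness I would start directly from the pathwise bound \eqref{2.2}. Using $\mu_{\al_s}\ge\hat\mu>0$ and $\La_{\al_s}\le\check\La$, estimate
\[
N_t\le N_0\e^{-\hat\mu t}+\check\La\int_0^t\e^{-\hat\mu(t-s)}\d s\le N_0+\frac{\check\La}{\hat\mu}=:C,\qquad\text{a.s.}
\]
Hence by Markov's inequality $\p(N_t>K)\le C/K$ uniformly in $t\ge 0$. The set $K_\veps:=\{(s,i,r)\in\overline{\R}_+^3:s+i+r\le C/\veps\}\times\S$ is compact in $E$ and satisfies $\mu_T(E\setminus K_\veps)\le\veps$ for every $T>0$, which is exactly Prohorov tightness of $\{\mu_T:T>0\}$.

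For the Feller property I would exploit the fact that in Model I the chain $(\al_t)$ is state-independent, so its law does not depend on the initial condition $(x,i)$ of the continuous component. Conditioning on a realization of $(\al_t)$, the ODE \eqref{1.1} is piecewise smooth with locally Lipschitz and linearly growing coefficients by $({\bf A1})$, so Lemma \ref{t2.1} provides global existence and standard ODE theory gives continuous dependence of $X_t$ on $(s_0,i_0,r_0)$ on each interval $[\tau_k,\tau_{k+1})$, hence on $[0,t]$; averaging over the law of $(\al_t)$ preserves boundedness and continuity, so $P_t:C_b(E)\to C_b(E)$. Combining Feller property with the tightness above, Krylov--Bogoliubov implies that any weak limit point of $\mu_T$ as $T\to\infty$ is an invariant probability measure of $(X_t,\al_t)_{t\ge 0}$, and such a limit exists by tightness. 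The only mildly delicate point is bookkeeping at the jump times $\tau_k$ to confirm continuity of the flow across switches, which is automatic since the vector field is continuous in the continuous variable and the chain law is decoupled from $x$.
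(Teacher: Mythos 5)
Your argument is essentially the paper's own proof: bound $N_t$ via \eqref{2.2} to get a uniform moment/pathwise bound, deduce tightness of the Ces\`aro averages by Markov/Chebyshev on a compact sublevel set of $s+i+r$, and invoke the Krylov--Bogoliubov theorem together with the Feller property of $(S_t,I_t,R_t,\al_t)$. The only difference is cosmetic: you sketch why the semigroup is Feller (piecewise continuous dependence on initial data, state-independent switching), which the paper simply asserts, and you work in the closed octant to make the sublevel sets genuinely compact; both are harmless refinements of the same route.
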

\begin{proof}
Remark that $(S_t,I_t,R_t,\al_t)_{t\ge0}$ is a Feller process.
According to \eqref{2.2}, we deduce that
\begin{equation}\label{eq3}
S_t\leq N_t\leq N_0\e^{-\hat\mu t}+\check\La/\hat\mu.
\end{equation}
For any $R>0$, let $B_R(0)=\{(s_0,i_0,r_0)\in\R^3_+:s_0+i_0+r_0\le
R\}$ and $P_t(s_0,i_0,r_0,i;\cdot)$ be the transition kernel of
$(S_t,I_t,R_t,\al_t)$ with the starting point
$(s_0,i_0,r_0,i)\in\R^3_+\times\mathcal {M}$. For any $t>0$ and
$\Gamma\in\mathscr{B}(\R^3_+\times\mathcal {M})$, define the
probability measure
\begin{equation*}
\mu_t(\Gamma)=\frac{1}{t}\int_0^tP_s(s_0,i_0,r_0,i;\Gamma)\d s.
\end{equation*}
Then, for any $\varepsilon>0$,  by means of Chebyshev's inequality
and \eqref{eq3}, there exists an $R>0$ sufficiently large such that
\begin{equation*}
\mu_t(B_R(0)\times\mathcal
{M})=\frac{1}{t}\int_0^tP_s(s_0,i_0,r_0,i;B_R(0)\times\mathcal
{M})\d s\ge1-\frac{1}{R}\sup_{t\ge0}\E N_t\ge 1-\varepsilon.
\end{equation*}
Hence, $(\mu_t)_{t\ge0}$ is tight since $B_R(0)$ is a compact subset
of $\R^3_+.$    As a result, $(S_t,I_t,R_t,\al_t)_{t\ge0}$ admits an
invariant probability measure via Krylov-Bogoliubov's theorem (see
e.g. \cite[Theorem 3.1.1]{DZ}).
\end{proof}

Our first main result in this paper is stated as below.
\begin{thm}\label{t3.1}
Suppose $({\bf A1})$ and   $({\bf A2})$  hold  and assume further
 that there exist $\Phi:[0,\infty)\rightarrow[0,\infty)$ with
$\lim_{t\rightarrow\infty}\Phi_t=0$ and
$\Upsilon:\S\rightarrow[0,\infty)$ such that
\begin{equation}\label{c1}
G(I_t,\al_t)S_t/I_t\le \Phi_t+\Upsilon_{\al_t}
\end{equation}
and that
\begin{equation}\label{a5}
\Theta_0:=\frac{\sum_{i\in\S}\pi_i\Upsilon_i}{\sum_{i\in
\S}\pi_i(\mu_i+\nu_i+\delta_i)} <1.
\end{equation}
Then
\begin{equation}\label{q1}
  \lim_{t\ra\infty} I_t=0,\quad a.s.~~~~\mbox{ and }~~~
  \lim_{t\ra \infty} R_t=0,\quad a.s.
\end{equation}
and
\begin{equation}\label{k1}
 \lim_{t\ra\infty}\Big(\frac{1}{t}\int_0^t\mu_{\al_s}S_s\d
 s\Big)=\sum_{i\in\S}\pi_i\Lambda_i.
\end{equation}

\end{thm}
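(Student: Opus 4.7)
The plan is to pass through $\log I_t$ to obtain exponential extinction of $I_t$, deduce $R_t\to 0$ from its linear ODE driven by $I_t$, and then extract \eqref{k1} from the conservation-type identity \eqref{eq8} for $N_t$.

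First I would exploit that $I_t>0$ from Lemma \ref{t2.1} and divide the second equation of \eqref{1.1} by $I_t$, then integrate on $[0,t]$ to get
\begin{equation*}
\frac{1}{t}\log\frac{I_t}{I_0}=\frac{1}{t}\int_0^t\frac{G(I_s,\al_s)S_s}{I_s}\,\d s-\frac{1}{t}\int_0^t(\mu_{\al_s}+\nu_{\al_s}+\delta_{\al_s})\,\d s.
\end{equation*}
Applying hypothesis \eqref{c1}, the first right-hand integral is at most $\frac{1}{t}\int_0^t\Phi_s\,\d s+\frac{1}{t}\int_0^t\Upsilon_{\al_s}\,\d s$. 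The Ces\`aro mean of $\Phi$ vanishes since $\Phi_t\to 0$, while under ({\bf A2}) the ergodic theorem for the irreducible positive recurrent chain $(\al_t)$ yields, almost surely,
\begin{equation*}
\frac{1}{t}\int_0^t\Upsilon_{\al_s}\,\d s\to\sum_{i\in\S}\pi_i\Upsilon_i,\qquad \frac{1}{t}\int_0^t(\mu_{\al_s}+\nu_{\al_s}+\delta_{\al_s})\,\d s\to\sum_{i\in\S}\pi_i(\mu_i+\nu_i+\delta_i).
\end{equation*}
Together with \eqref{a5} this gives $\limsup_{t\to\infty}t^{-1}\log I_t\le(\Theta_0-1)\sum_{i\in\S}\pi_i(\mu_i+\nu_i+\delta_i)<0$, so $I_t\to 0$ exponentially fast a.s.

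For $R_t\to 0$, I would use the variation-of-constants formula for the third equation of \eqref{1.1}:
\begin{equation*}
R_t=R_0\e^{-\int_0^t(\mu_{\al_r}+\gamma_{\al_r})\d r}+\int_0^t\nu_{\al_s}I_s\e^{-\int_s^t(\mu_{\al_r}+\gamma_{\al_r})\d r}\,\d s.
\end{equation*}
The first term is bounded by $R_0\e^{-(\hat\mu+\hat\gamma)t}\to 0$. For the second, the uniform bound $\nu_{\al_s}\le\check\nu$, the exponential decay of $I_s$ obtained above, and the exponential kernel with rate at least $\hat\mu+\hat\gamma>0$ allow a routine splitting argument (or a direct estimate) to show the integral tends to $0$ a.s.

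Finally, for \eqref{k1}, I would integrate \eqref{eq8} over $[0,t]$ and divide by $t$:
\begin{equation*}
\frac{1}{t}\int_0^t\mu_{\al_s}N_s\,\d s=\frac{1}{t}\int_0^t\La_{\al_s}\,\d s-\frac{N_t-N_0}{t}-\frac{1}{t}\int_0^t\delta_{\al_s}I_s\,\d s.
\end{equation*}
The first right-hand term converges a.s.\ to $\sum_{i\in\S}\pi_i\La_i$ by the ergodic theorem; $N_t$ is uniformly bounded by Lemma \ref{t2.1}, so the middle term vanishes; and the last term vanishes since $\delta_{\al_s}\le\check\delta$ and $I_s\to 0$. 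Decomposing $\mu_{\al_s}N_s=\mu_{\al_s}S_s+\mu_{\al_s}(I_s+R_s)$ and noting that $\frac{1}{t}\int_0^t\mu_{\al_s}(I_s+R_s)\,\d s\to 0$ (by the already proved $I_s,R_s\to 0$ and $\mu_{\al_s}\le\check\mu$) delivers \eqref{k1}. The only delicate point is the joint application of the chain's ergodic theorem together with the pathwise estimates above; once the almost-sure convergence of the Ces\`aro averages of $\Upsilon_{\al_\cdot}$, $\La_{\al_\cdot}$ and $\mu_{\al_\cdot}+\nu_{\al_\cdot}+\delta_{\al_\cdot}$ is in hand, the rest is bookkeeping.
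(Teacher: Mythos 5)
Your proposal is correct and follows essentially the same route as the paper: the logarithmic transform of the $I$-equation combined with \eqref{c1}, the Ces\`aro/ergodic limits under ({\bf A2}) to get a negative exponential growth rate for $I_t$, a variation-of-constants plus splitting argument for $R_t$, and the identity obtained from \eqref{eq8} together with the boundedness of $N_t$ for \eqref{k1}. The only cosmetic differences (exact variation-of-constants with random rates instead of the paper's differential inequality with $\hat\mu+\hat\gamma$, and decomposing $\mu_{\al_s}N_s$ rather than isolating $\mu_{\al_s}S_s$ directly) do not change the substance of the argument.
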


\begin{proof}
Keep $(S_t,I_t,R_t)\in\R_+^3$ in mind due to Lemma \ref{t2.1}. From
\eqref{1.1} and \eqref{c1}, it follows that
\begin{equation}\label{v1}
\begin{split}
  \frac{\d }{\d t}\ln I_t&=G(I_t,\al_t) S_t/I_t-(\mu_{\al_t}+\nu_{\al_t}+\delta_{\al_t})\\
  &\leq \Phi_t+\Upsilon_{\al_t}-(\mu_{\al_t}+\nu_{\al_t}+\delta_{\al_t}).
\end{split}
\end{equation}
So one has
\begin{equation}\label{d1}
\begin{split}
\ln (I_t/I_0)
 &\le \int_0^t
 \Phi_s\d s+\int_0^t\{ \Upsilon_{\al_s}-(\mu_{\al_s}+\nu_{\al_s}+\delta_{\al_s})
 \}\d s.
\end{split}
 \end{equation}
Hence, by virtue of the strong ergodicity theorem for Markov chains
(see  e.g.  \cite{And}), besides
$\lim_{t\rightarrow\infty}\Phi_t=0$, we arrive at
\begin{equation*}
\limsup_{t\ra \infty}\frac{\ln I_t}{t}\leq
\sum_{i\in\S}\{\Upsilon_i-(\mu_i+\nu_i+\delta_i)\}\pi_i,\quad a.s.
\end{equation*}
Thus, $\lim_{t\ra\infty} I_t=0$, a.s., follows from \eqref{a5}.

In what follows, we intend to show $\lim_{t\ra \infty} R_t=0$,\
a.s. To end this, observe that
\[ \d R_t \leq (\check\nu I_t-(\hat\mu+\hat\gamma)R_t)\d t.\]
Subsequently, by applying the chain rule to
$\d(\e^{(\hat\mu+\hat\gamma)t}R_t)$, we deduce that
\begin{equation}\label{eq0}
R_t\leq R_0\e^{-(\hat\mu+\hat\gamma)
t}+\check\nu\int_0^t\e^{-(\hat\mu+\hat\gamma)(t-s)}I_s\d s.
\end{equation}
Since $\lim_{t\ra \infty} I_t=0$,\ a.s.,     for any
$\varepsilon>0$, there exist $\Omega_0\subseteq\Omega$ with
$\mathbb{P}(\Omega_0)=1$ and $T=T(\omega)>0$ such that
\begin{equation*}
I_t(\omega)\le
\varepsilon(\hat\mu+\hat\gamma)/(3\check\nu),~~~~~t\ge
T,~~~\omega\in\Omega_0,
\end{equation*}
which of course implies that
\begin{equation*}
\check\nu\int_T^t\e^{-(\hat\mu+\hat\gamma)(t-s)}I_s(\omega)\d s\le
\varepsilon/3,~~~~\omega\in\Omega_0,~~~t\ge T.
\end{equation*}
This, in addition to \eqref{eq0}, yields that
\begin{equation*}
\begin{split}
R_t(\omega)&\leq R_0\e^{-(\hat\mu+\hat\gamma)
t}+\check\nu\int_0^T\e^{-(\hat\mu+\hat\gamma)(t-s)}I_s(\omega)\d
s+\check\nu\int_T^t\e^{-(\hat\mu+\hat\gamma)(t-s)}I_s(\omega)\d s\\
&\le \varepsilon/3+R_0\e^{-(\hat\mu+\hat\gamma)
t}+\check\nu(N_0+\check\La/\hat\mu)\int_0^T\e^{-(\hat\mu+\hat\gamma)(t-s)}\d
s\\
&\le\varepsilon,~~~~~\omega\in\Omega_0
\end{split}
\end{equation*}
for  any $$t\ge
T\vee\Big(\frac{1}{\hat\mu+\hat\gamma}\ln\frac{3R_0}{\varepsilon}\Big)\vee
\Big(T+\frac{1}{\hat\mu+\hat\gamma}\ln\frac{3\check\nu(N_0+\frac{\check\La}{\hat\mu})}{\varepsilon(\hat\mu+\hat\gamma)}\Big).$$
Consequently, $\lim_{t\ra \infty} R_t=0$,\  a.s., follows
immediately.

By \eqref{q1}, one has
\begin{equation}\label{c00}
 \lim_{t\ra\infty}  \Big(\frac{1}{t}\int_0^tI_s\d s\Big)=0,\quad a.s.~~~~\mbox{ and }~~~
  \lim_{t\ra \infty}\Big(\frac{1}{t}\int_0^t R_s\d s\Big)=0,\quad a.s.
\end{equation}
From \eqref{eq8}, it follows that
\begin{equation*}
  \frac{1}{t}\int_0^t\mu_{\al_s}S_s\d s=\frac{ N_0-N_t}{t} +\frac{1}{t}\int_0^t\{\La_{\al_s}-\mu_{\al_s}R_s-(\mu_{\al_s}+\delta_{\al_s})I_s\}\d s,~~~~~t>0.
  \end{equation*}
This, in addition to \eqref{eq3}, \eqref{c00} as well as  the strong
ergodic theorem for the continuous-time Markov chains, yields the
assertion \eqref{k1}.
\end{proof}

It is easy to examine that all the incidence rate functions with
$\ell>1$ satisfy \eqref{c1} by taking advantage of Lemma \ref{t2.1}.
Now we present some applications of Theorem \ref{t3.1}. Firstly, in
\eqref{1.1} we choose
\begin{equation}\label{c6}
G(x,i)= \beta_ix/f(x),~~~~x\ge0,
\end{equation} where
$\beta_\cdot:\S\rightarrow\R_+$ and
\begin{enumerate}
\item[({\bf A3})]
 $f:\R_+\rightarrow\R_+$ with $f(0)=1$ is continuous  and $f'(x)>0$ for any $ x\ge0.$
\end{enumerate}

The corollary  below provides a sufficient criterion to examine the
extinction of the infectious individuals even though the infectious
individuals are persistent in some fixed environments.

\begin{cor}\label{co1}
Let $({\bf A3})$  hold and assume that
\begin{equation}\label{eq4}
\Theta_1:=\frac{\check \La \sum_{i\in\S}\pi_i\beta_i}{\hat\mu
\sum_{i\in\S}\pi_i(\mu_i+ \nu_i+\delta_i)}<1.
\end{equation}
Then, for $(S_t,I_t,R_t)_{t\ge0}$ solved by \eqref{1.1} and
\eqref{v4} with $G$ in \eqref{c6}, all of the assertions in Theorem
\ref{t3.1} hold.

\end{cor}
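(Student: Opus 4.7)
The plan is to deduce this corollary directly from Theorem \ref{t3.1} by verifying the structural hypothesis \eqref{c1} and showing that the scalar $\Theta_0$ appearing there reduces to $\Theta_1$. The first step is to simplify the ratio $G(I_t,\al_t)S_t/I_t$ under the specific choice \eqref{c6}. Substituting $G(x,i)=\beta_ix/f(x)$ yields
\begin{equation*}
\frac{G(I_t,\al_t)S_t}{I_t}=\frac{\beta_{\al_t}S_t}{f(I_t)}.
\end{equation*}
Assumption ({\bf A3}) gives $f(0)=1$ and $f'>0$, so $f(I_t)\ge 1$ along the (positive) trajectory guaranteed by Lemma \ref{t2.1}; consequently $G(I_t,\al_t)S_t/I_t\le \beta_{\al_t}S_t$.

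Next I would control $S_t$ via Lemma \ref{t2.1}. Since $S_t\le N_t$ and $\mu_{\al_s}\ge\hat\mu$, $\La_{\al_s}\le\check\La$, the bound \eqref{2.2} (equivalently \eqref{eq3}) gives
\begin{equation*}
S_t\le N_0\e^{-\hat\mu t}+\frac{\check\La}{\hat\mu}.
\end{equation*}
Multiplying by $\beta_{\al_t}\le\check\beta$ produces
\begin{equation*}
\frac{G(I_t,\al_t)S_t}{I_t}\le \check\beta N_0\e^{-\hat\mu t}+\frac{\check\La}{\hat\mu}\beta_{\al_t}.
\end{equation*}
So I would set $\Phi_t:=\check\beta N_0\e^{-\hat\mu t}$, which satisfies $\lim_{t\to\infty}\Phi_t=0$, and $\Upsilon_i:=\beta_i\check\La/\hat\mu$ on $\S$. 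This shows \eqref{c1} holds.

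Finally, with this choice, one computes
\begin{equation*}
\Theta_0=\frac{\sum_{i\in\S}\pi_i\Upsilon_i}{\sum_{i\in\S}\pi_i(\mu_i+\nu_i+\delta_i)}=\frac{\check\La\sum_{i\in\S}\pi_i\beta_i}{\hat\mu\sum_{i\in\S}\pi_i(\mu_i+\nu_i+\delta_i)}=\Theta_1<1
\end{equation*}
by hypothesis \eqref{eq4}. Theorem \ref{t3.1} then delivers \eqref{q1} and \eqref{k1} verbatim. There is essentially no serious obstacle here; the only point to be careful about is the monotonicity argument that gives $f(I_t)\ge 1$ (used to strip off the denominator), which is exactly what ({\bf A3}) is designed for. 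The remainder of the proof is a clean bookkeeping exercise identifying $\Theta_0$ with $\Theta_1$.
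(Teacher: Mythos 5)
Your proof is correct and follows essentially the same route as the paper: bound $G(I_t,\al_t)S_t/I_t=\beta_{\al_t}S_t/f(I_t)\le\beta_{\al_t}N_t$ using $f\ge f(0)=1$ together with \eqref{eq3}, take an exponentially decaying $\Phi_t$ and $\Upsilon_i=\check\La\beta_i/\hat\mu$, and identify $\Theta_0=\Theta_1<1$ before invoking Theorem \ref{t3.1}. The only point the paper states explicitly that you leave implicit is that ({\bf A3}) makes $G(x,i)=\beta_i x/f(x)$ satisfy ({\bf A1}), which is what justifies applying Lemma \ref{t2.1} and Theorem \ref{t3.1} at all.
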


\begin{proof}
By $f(0)=1$ and $f'(x)>0$ for any $ x\ge0$, we deduce that $G(x,i)=
\beta_ix/f(x)$ satisfies the assumption ({\bf A1}) so that Lemma
\ref{t2.1} is applicable. From \eqref{eq3}, together with  $f(0)=1$
and $f'(x)>0$ for any $ x\ge0$, we
  obtain   that
\begin{equation*}
\begin{split}
G(I_t,\al_t)S_t/I_t&=\beta_{\al_t}S_t/f(I_t)\le\beta_{\al_t}N_t
\le\beta_{\al_t}(N_0\e^{-\hat\mu t}+\check\La/\hat\mu).
\end{split}
\end{equation*}
As a consequence, we infer that \eqref{c1} holds with
$\Phi_t=c\,\e^{-\hat\mu t}$ for some  $c>0$ and $\Upsilon_{\al_t}=
\check\La\beta_{\al_t}/\hat\mu $ so that \eqref{a5} is satisfied
thanks to \eqref{eq4}. Thus, the desired assertions  follow  from
Theorem \ref{t3.1}.
\end{proof}

Another application of Theorem \ref{t3.1} is to take
\begin{equation}\label{oo}
G(x,i)=\frac{\beta_ix^i}{1+ax^2},~~~~x\ge0
\end{equation} for some
$\beta_\cdot:\S\rightarrow\R_+$ and $a>0$. The following corollary
reveals   the influence of the random switching of the incidence
functions on
 extinction of infectious individuals.
\begin{cor}\label{co}
Let $({\bf A2})$  hold and assume that
\begin{equation}\label{c3}
\Theta_2:=\frac{\sum_{i\in\S}\pi_i\beta_i\big(\check\La/\hat\mu\big)^i}{\sum_{i\in
\S}\pi_i(\mu_i+\nu_i+\delta_i)} <1,
\end{equation}
then, for $(S_t,I_t,R_t)_{t\ge0}$ solved by \eqref{1.1} and
\eqref{v4} with $G$ in \eqref{oo}, all of the assertions in Theorem
\ref{t3.1} hold.
\end{cor}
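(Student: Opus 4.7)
The plan is to apply Theorem \ref{t3.1} with the explicit choice $\Upsilon_i := \beta_i(\check\La/\hat\mu)^i$; with this choice the ratio $\Theta_0$ in \eqref{a5} coincides exactly with $\Theta_2$ in \eqref{c3}, so the real content of the corollary is the verification of the pointwise bound \eqref{c1}.

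Before doing that I would dispose of a small bookkeeping issue. The function $G(x,i) = \beta_i x^i/(1+ax^2)$ grows like $x^{i-2}$ as $x\to\infty$, so for states $i\ge 4$ hypothesis $({\bf A1})$ fails. However $G(\cdot,i)$ is locally Lipschitz on $\R_+$ and of polynomial growth of degree at most $M = \max \S$, so the weaker hypothesis $({\bf A1}')$ of Remark \ref{re2} is immediate, and consequently the existence/positivity statement of Lemma \ref{t2.1} and the bound \eqref{2.2} remain in force. In particular $S_t,I_t\le N_t\le N_0\e^{-\hat\mu t} + \check\La/\hat\mu$.

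The verification of \eqref{c1} then rests on the trivial estimate $I_t^{i-1}/(1+aI_t^2)\le I_t^{i-1}$, giving
\begin{equation*}
\frac{G(I_t,\al_t)S_t}{I_t} \;=\; \frac{\beta_{\al_t}I_t^{\al_t-1}S_t}{1+aI_t^2} \;\le\; \beta_{\al_t}I_t^{\al_t-1}S_t \;\le\; \beta_{\al_t}N_t^{\al_t} \;\le\; \beta_{\al_t}\bigl(N_0\e^{-\hat\mu t} + \check\La/\hat\mu\bigr)^{\al_t}.
\end{equation*}
Expanding the right-hand side by the binomial theorem isolates the purely constant term $\beta_{\al_t}(\check\La/\hat\mu)^{\al_t}$ — exactly the candidate $\Upsilon_{\al_t}$ — and leaves a residual that is a finite (hence $\S$-uniform) sum of terms of the form $\e^{-k\hat\mu t}$ with $k\ge 1$. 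Defining $\Phi_t$ to be the pointwise maximum over $i\in\S$ of these residuals, one has $\Phi_t\to 0$ and \eqref{c1} holds. Theorem \ref{t3.1} then delivers \eqref{q1} and \eqref{k1}.

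No step presents a serious obstacle; the only mild subtlety is recognizing that Remark \ref{re2} must be invoked to admit the superlinear incidence functions that arise for $i\ge 4$, which would otherwise obstruct even the existence and positivity of solutions on which the whole argument depends.
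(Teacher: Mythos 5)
Your proposal is correct and follows essentially the same route as the paper's proof: verify \eqref{c1} with $\Upsilon_i=\beta_i(\check\La/\hat\mu)^i$ via the bound $G(I_t,\al_t)S_t/I_t\le\beta_{\al_t}N_t^{\al_t}$ and the estimate \eqref{eq3}, invoking Remark \ref{re2} to cover the superlinear states, and then apply Theorem \ref{t3.1}. Your explicit remark on why $({\bf A1})$ fails for $i\ge4$ and $({\bf A1'})$ suffices is a slightly more careful articulation of the paper's brief appeal to Remark \ref{re2}, but the argument is the same.
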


\begin{proof}
In terms of the definition of $G$ introduced in \eqref{oo} and by
taking Remark \ref{re2} into account, we deduce that
 $0<I_t,S_t\le N_t$ and \eqref{eq3} holds so that
\begin{equation}\label{a2}
\begin{split}
G(I_t,\al_t)S_t/I_t&=\frac{\beta_{\al_t}I_t^{\al_t-1}S_t}{1+aI_t^2}\le\beta_{\al_t}N_t^{\al_t}=\beta_{\al_t}\sum_{i=1}^MN_t^{i}{\bf
1}_{\{\al_t=i\}}\\
&\le \beta_{\al_t}\sum_{i=1}^M(N_0\e^{-\hat\mu
t}+\check\La/\hat\mu)^i{\bf 1}_{\{\al_t=i\}}\\
&\le c\,\e^{-\hat\mu
t}+\beta_{\al_t}\sum_{i=1}^M(\check\La/\hat\mu)^i{\bf
1}_{\{\al_t=i\}}\\
&=c\,\e^{-\hat\mu t}+\beta_{\al_t} (\check\La/\hat\mu)^{\al_t}
\end{split}
\end{equation}
for some constant $c>0.$ Therefore, \eqref{c1} holds with
$\Phi_t=c\,\e^{-\hat\mu t}$ and $\Upsilon_{\al_t}=\beta_{\al_t}
(\check\La/\hat\mu)^{\al_t}$. On the other hand, \eqref{a5} follows
owing to \eqref{c3}. As a result, all of the assertions  hold  true
in terms of Theorem \ref{t3.1}.
\end{proof}

Now we proceed to provide some examples to illustrate the
applications of Corollaries \ref{co1} and \ref{co} so that our main
result (i.e. Theorem \ref{t3.1}) is applicable. To portray the
behavior of the infectious individuals in each fixed environment, we
introduce the quantity $R_0^{(i)}, i\in\S,$ defined by
\begin{equation}\label{thres-1}R_0^{(i)}=\frac{\La_i\beta_i}{\mu_i
(\mu_i+\nu_i+\delta_i)},~~~i\in\S.\end{equation}

\begin{exam}\label{exam1}
{\rm We focus on
 the model \eqref{1.1}, in which   $G$ is given as in \eqref{c6} with   $f(0)=1$, and $(\alpha_t)_{t\ge0}$ is a
continuous-time Markov chain with the state space $\mathcal
{M}=\{1,2\}$ and the $Q$-matrix
\begin{equation}\label{z1}
Q= \left(\begin{array}{ccc}
  -p & p\\
  q & -q\\
  \end{array}
  \right),~~~~p,~q>0.
  \end{equation}
 Assume that
\begin{equation}\label{w00}
\La_2<\La_1,~~~~~\mu_2<\mu_1,
\end{equation}
\begin{equation}\label{w111}
(\mu_1+\nu_1+\delta_1)/\beta_1> \La_1/\mu_2,~~~
(\mu_2+\nu_2+\delta_2)/\beta_2< \La_2/\mu_2,
\end{equation}
and that
\begin{equation}\label{w3}
\frac{q}{p}>\frac{\La_1\beta_2-\mu_2(\mu_2+\nu_2+\delta_2)}{\mu_2(\mu_1+\nu_1+\delta_1)-\La_1\beta_1}>0.
\end{equation}}
\end{exam}
It is apparent that \eqref{w00}
 implies that  $\check
\La=\La_1$ and $\hat\mu=\mu_2$. By a simple calculation, the unique
invariant probability measure of $(\alpha_t)_{t\ge0}$ is given by
\begin{equation}\label{a3}
\pi=(\pi_1,\pi_2)=\Big(\frac{q}{p+q},\frac{p}{p+q}\Big).
\end{equation}
Hence, by taking  \eqref{w3} into consideration, it follows that
\begin{equation*}
\Theta_1=\frac{\La_1 (q\beta_1+p\beta_2)}{\mu_2\{q(\mu_1+
\nu_1+\delta_1)+p(\mu_2+ \nu_2+\delta_2)\} }<1.
\end{equation*}
Whence, Corollary \ref{co1} implies  $\lim_{t\ra \infty} I_t=0$,
a.s., and, $\lim_{t\ra \infty} R_t=0$,  a.s.

In view of  $f(0)=1$ and
  \eqref{w111}, one has
$ R_0^{(1)}<1, R_0^{(2)}>1$. As a result, in terms of \cite[Theorem
2.1]{Cai15}, the disease-free equilibrium
$E_0^{(1)}:=(\La_1/\mu_1,0,0)$ of the SIRS model \eqref{1.1} with
$\al_t\equiv1$
  is globally
asymptotically stable,  whereas   the disease-free equilibrium
$E_0^{(2)}:=(\La_2/\mu_2,0,0)$ of the SIRS model \eqref{1.1} with
$\al_t\equiv2$
  is unstable.
Obviously, \eqref{w111}  and \eqref{w3} hold  for
\begin{equation*}
\beta_1=\mu_2=\nu_2=\Lambda_2=0.1,~\mu_1=\nu_1=p=0.2,~\beta_2=0.3,~\delta_1=0.05,~\delta_2=0.04,~\Lambda_1=0.4,q=4.
\end{equation*}

The following example shows that the random switching of the
incidence functions can improve the extinction of the infectious
individuals in certain sense. More precisely, for the model
\eqref{1.1} with $G$ given in \eqref{oo}, the infectious individuals
are extinct although they might be persistent with certain incidence
function in some environment.

\begin{exam}\label{exam2}
{\rm Consider the model \eqref{1.1} with $G$ introduced in
\eqref{oo}, where $(\alpha_t)$ is a continuous-time Markov chain
with the state space $\S=\{1,2\}$ and the $Q$-matrix $Q$ given by
\eqref{z1}. Assume that
\begin{equation}\label{r1}
\La_2<\La_1, ~~\mu_2<\mu_1,
\end{equation}
\begin{equation}\label{r3}
\beta_2\Big(\frac{\La_1}{\mu_2}\Big)^2<\mu_2+\nu_2+\delta_2,~~~
\mu_1(\mu_1+\nu_1+\delta_1)<\beta_1\La_1
\end{equation}
and that
\begin{equation}\label{w7}
0<\frac{q}{p}<\frac{\mu_2+\nu_2+\delta_2-\beta_2\Big(\frac{\La_1}{\mu_2}\Big)^2}{\frac{\beta_1\La_1}{\mu_2}-(\mu_1+\nu_1+\delta_1)}.
\end{equation}
}
\end{exam}

Thanks to \eqref{r1}, one has $\check \La=\La_1$ and
$\hat\mu=\mu_2.$ On the other hand, it is easy to see that
$\Theta_2$, defined in \eqref{c3}, is less than $1$ by virtue of
\eqref{w7}. Hence, Corollary \ref{co}  demonstrates that
$\lim_{t\ra\infty} I_t=0$, a.s.,  and,
  $\lim_{t\ra \infty} R_t=0$, a.s.

  Also, it follows from \eqref{r3}
  that $R_0^{(2)}$, defined in \eqref{thres-1}, is greater than $1$.
Consequently, $I_t$ with $\al_t\equiv1$ is unstable due to
\cite[Theorem 2.1]{Cai15}. More concretely, by taking
\begin{equation*}
\begin{cases}
\La_1=0.4\\
\La_2=0.3
\end{cases}
\begin{cases}
\mu_1=0.25\\
\mu_2=0.2
\end{cases}
\begin{cases}
\beta_1=0.3\\
\beta_2=0.1
\end{cases}
\begin{cases}
\nu_1=0.2\\ \nu_2=0.3
\end{cases}
\begin{cases}
\delta_1=0.02\\ \delta_2=0.05
\end{cases}
\begin{cases}
 p=3\\ q=2
\end{cases},
\end{equation*}
we find that \eqref{r1}-\eqref{w7} hold, respectively.

The following theorem presents some sufficient conditions to depict
the persistence of the infectious individuals. The criterion
provided allows the infectious individuals to be extinct in certain
environments.

\begin{thm}\label{t2.3}
Let $({\bf A1})$ and $({\bf A2})$    hold and suppose further that
$\lim_{x\rightarrow0}G(x,j)/x>0$ for any $j\in\S$ and that
\begin{equation}\label{cutoff2}
\Theta_3:=\frac{\sum_{i\in\S}\pi_i\La_i}
{\tau\sum_{i\in\S}\pi_i(\mu_i+\nu_i+\delta_i)}>1,
\end{equation}
where
\begin{equation}\label{d2}
\tau:=\max_{j\in\S}\Big(\frac{G(0,j)+\mu_j}{\lim_{x\rightarrow0}G(x,j)/x}\Big).
\end{equation}
Then
\begin{equation}\label{2.9}
\liminf_{t\rightarrow\infty}\Big(\frac{1}{t}\int_0^t I_s\d
s\Big)>0,\quad a.s.,
\end{equation} that is, the infectious individuals is persistent.
\end{thm}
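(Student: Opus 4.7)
The plan is to argue by contradiction. Suppose there exists $\Omega^\star\subseteq\Omega$ with $\p(\Omega^\star)>0$ on which $\liminf_{t\ra\infty}\frac{1}{t}\int_0^t I_s\,\d s=0$; intersecting with the full-measure set where the ergodic theorem for $(\al_t)$ holds, I may additionally assume $\frac{1}{t}\int_0^t f(\al_s)\,\d s\ra \sum_i\pi_i f(i)$ on $\Omega^\star$ for every bounded $f:\S\ra\R$. Fix $\omega\in\Omega^\star$ and choose $t_n\uparrow\infty$ with $\frac{1}{t_n}\int_0^{t_n}I_s(\omega)\,\d s\ra 0$. The goal is to prove that $\liminf_n\frac{\ln I_{t_n}(\omega)}{t_n}>0$, which contradicts the uniform upper bound $\ln I_{t_n}\le\ln(N_0+\check\La/\hat\mu)$ following from \eqref{eq3} and therefore forces \eqref{2.9}.

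The first ingredient is the analogue of \eqref{k1} along the subsequence,
\begin{equation*}
\frac{1}{t_n}\int_0^{t_n}\mu_{\al_s}S_s\,\d s\longrightarrow\sum_{i\in\S}\pi_i\La_i,
\end{equation*}
derived exactly as in the final paragraph of the proof of Theorem \ref{t3.1}: integrate \eqref{eq8} on $[0,t_n]$, divide by $t_n$, use the boundedness of $N_t$ from \eqref{eq3}, the ergodic theorem for $\La_{\al_s}$, and the fact that $\frac{1}{t_n}\int_0^{t_n}I_s\,\d s\ra 0$ forces both $\frac{1}{t_n}\int_0^{t_n}\delta_{\al_s}I_s\,\d s\ra 0$ and, via \eqref{eq0}, $\frac{1}{t_n}\int_0^{t_n}R_s\,\d s\ra 0$.

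The second ingredient is a lower bound on the drift of $\ln I_t$. Set $g_j:=\lim_{x\downarrow 0}G(x,j)/x>0$. Since $\S$ is finite, for each $\epsilon\in(0,1)$ there is $\delta_\epsilon>0$ such that $G(x,j)/x\ge(1-\epsilon)g_j$ for all $j\in\S$ and $0<x\le\delta_\epsilon$. Combined with $G\ge 0$ and $S_s\le N_0+\check\La/\hat\mu$ from \eqref{eq3}, this yields
\begin{equation*}
\frac{G(I_s,\al_s)}{I_s}S_s\ge(1-\epsilon)g_{\al_s}S_s-C\,\mathbf 1_{\{I_s>\delta_\epsilon\}},
\end{equation*}
with $C=(1-\epsilon)(\max_{j\in\S}g_j)(N_0+\check\La/\hat\mu)$; the time-average of the correction vanishes along $(t_n)$ because $\mathbf 1_{\{I_s>\delta_\epsilon\}}\le I_s/\delta_\epsilon$. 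Moreover, the definition \eqref{d2} of $\tau$ gives $g_j\ge(G(0,j)+\mu_j)/\tau\ge\mu_j/\tau$, whence $g_{\al_s}S_s\ge\mu_{\al_s}S_s/\tau$.

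Integrating $\frac{\d\ln I_t}{\d t}=\frac{G(I_t,\al_t)}{I_t}S_t-(\mu_{\al_t}+\nu_{\al_t}+\delta_{\al_t})$ on $[0,t_n]$, dividing by $t_n$, and combining the two previous steps, one obtains
\begin{equation*}
\liminf_{n\ra\infty}\frac{\ln I_{t_n}}{t_n}\ge\frac{1-\epsilon}{\tau}\sum_{i\in\S}\pi_i\La_i-\sum_{i\in\S}\pi_i(\mu_i+\nu_i+\delta_i),
\end{equation*}
which is strictly positive for $\epsilon$ small enough by $\Theta_3>1$, contradicting $\limsup_n\frac{\ln I_{t_n}}{t_n}\le 0$. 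The main obstacle is reconciling the pointwise linearization $G(x,j)/x\approx g_j$ (valid only for small $x$) with the integral-type hypothesis $\frac{1}{t_n}\int_0^{t_n}I_s\,\d s\ra 0$; the splitting across $\{I_s\le\delta_\epsilon\}$ together with the Markov-style bound $\mathbf 1_{\{I_s>\delta_\epsilon\}}\le I_s/\delta_\epsilon$ resolves this, while the constant $\tau$ is precisely what is needed to reduce $g_{\al_s}S_s$ to the ergodic quantity $\mu_{\al_s}S_s$ controlled in the first step.
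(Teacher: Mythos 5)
Your argument is correct, but it is not the route the paper takes. The paper's proof is direct and quantitative: it constructs the auxiliary function $F_{j,y}(x)=Kx+((\tau/x-1)G(x,j)-\mu_j)y\ge 0$ in \eqref{2.10}, feeds it into the $S$-equation (using $R_t\ge0$) to obtain the lower bound \eqref{2.11} on $\tau\int_0^t G(I_s,\al_s)S_s/I_s\,\d s$, and then combines this with the identity in \eqref{v1} and the boundedness of $I$ to reach \eqref{d3}, which after the ergodic theorem gives the explicit estimate $K\liminf_{t\to\infty}\frac1t\int_0^t I_s\,\d s\ge\sum_{j}\pi_j(\La_j-\tau(\mu_j+\nu_j+\delta_j))>0$. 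You instead argue by contradiction along a subsequence $t_n$ with $\frac1{t_n}\int_0^{t_n}I_s\,\d s\to0$: you rerun the \eqref{k1}-type computation from Theorem \ref{t3.1} (via \eqref{eq8} and \eqref{eq0}, noting that small time-averages of $I$ suffice in place of extinction) to get $\frac1{t_n}\int_0^{t_n}\mu_{\al_s}S_s\,\d s\to\sum_i\pi_i\La_i$, linearize $G$ at zero with the splitting $\mathbf 1_{\{I_s>\delta_\epsilon\}}\le I_s/\delta_\epsilon$, and use $g_j\ge\mu_j/\tau$ from \eqref{d2} to force $\liminf_n\ln I_{t_n}/t_n>0$, contradicting the bound $I_{t_n}\le N_0+\check\La/\hat\mu$. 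The two proofs use the same raw ingredients (the $\ln I$ identity, Lemma \ref{t2.1}, the ergodic theorem, the constant $\tau$), and your Markov-type truncation plays exactly the role of the paper's $Kx$ term in $F_{j,y}$; what the paper's version buys is an explicit lower bound on the time-average of $I$ (with constant $1/K$), while yours buys simplicity — no construction of $F$ or choice of $K$ — and, as a side benefit, the $(1-\epsilon)$ slack quietly sidesteps the strictness issue in \eqref{c2}, where at the index attaining the maximum in \eqref{d2} one only has $F_{j,y}(0)\ge0$ rather than $>0$.
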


\begin{proof}
 First of all, we claim that there exists a constant $K>0$ such
that
\begin{equation}\label{2.10}
F_{j,y}(x):= K x+((\tau/x-1)G(x,j) -\mu_j)y\geq 0
\end{equation}
for any $0\le x,y\le N_0+\check\La/\hat\mu$ and $j\in\S.$ Obviously,
\eqref{2.10} holds whenever $y=0.$ So, it is sufficient to verify
that \eqref{2.10} holds for $0<y\le N_0+\check\La/\hat\mu$. In what
follows, we set $0<y\le N_0+\check\La/\hat\mu$. According to the
definition of $\tau$,  it is obvious to see that
\begin{equation}\label{c2}
F_{j,y}(0)=(\tau \lim_{x\rightarrow0}G(x,j)/x-G(0,j) -\mu_j)y>0.
\end{equation}
By  the continuity of $x\mapsto F_{j,y}(x)$, we deduce from
\eqref{c2} that there exists $0<x_0<N_0+\check\La/\hat\mu$ such that
\eqref{2.10} holds for some $K=K_0>0$ and any $x\in[0,x_0].$ Next,
for any $x\in[x_0,N_0+\check\La/\hat\mu]$, observe that
\begin{equation}
\begin{split}
F_{j,y}(x)\ge K
x_0-\max_{x\in[x_0,N_0+\check\La/\hat\mu],j\in\S}|(\tau/x-1)G(x,j)
-\mu_j|(N_0+\check\La/\hat\mu).
\end{split}
\end{equation}
Thus, \eqref{2.10} is available  by taking $K>0$ sufficiently large.

 Taking advantage
of  $R_t\ge0$, a.s., we infer that
\begin{align*}
  \frac{\d }{\d
  t}S_t&\ge\La_{\al_t}-\mu_{\al_t}S_t-G(I_t,\al_t)S_t\\
  &=\La_{\al_t}-\tau G(I_t,\al_t)S_t/I_t-KI_t+F_{\al_t,S_t}(I_t)\\
  &\ge \La_{\al_t}-\tau G(I_t,\al_t)S_t/I_t-KI_t,
\end{align*}
which further yields that
\begin{equation}\label{2.11}
\tau \int_0^t\frac{G(I_s,\al_s)S_s}{I_s}\d s\geq S_0-S_t- K\int_0^t
I_s\d s+\int_0^t\La_{\al_s}\d s.
\end{equation}
Substituting the first display of \eqref{v1}  into \eqref{2.11} and
taking \eqref{eq3} into account, one has
\begin{align}\label{d3}
K\int_0^t I_s\d s&\geq S_0-N_0-\check\La/\hat\mu- \tau\ln c
+\int_0^t( \La_{\al_s}-\tau
  (\mu_{\al_s}+\nu_{\al_s}+\nu_{\al_s}))\d s
\end{align}
for some constant $c>0$. Consequently, the strong ergodicity theorem
for the Markov chain $(\alpha_t)_{t\ge0}$  yields that
\begin{align*}
 K\liminf_{t\rightarrow\infty}\Big(\frac{1}{t}\int_0^t I_s\d s\Big)\ge&\sum_{j\in\mathcal {M}}(
 \La_j-\tau
  (\mu_j+\nu_j+\delta_j))\pi_j.
\end{align*}
Whence, \eqref{2.9} follows directly from \eqref{cutoff2}.
\end{proof}

Applying Theorem \ref{t2.3} to the incidence rate function $G$ in
\eqref{c6}, we obtain the following corollary, which  states some
sufficient conditions to examine the persistence of the individuals.

\begin{cor}\label{co2}
Let $({\bf A2})$   hold and suppose further that
\begin{equation}\label{v2}
\Theta_4:=\frac{\sum_{i\in\S}\pi_i\La_i} {
\max_{i\in\S}(\mu_i/\beta_i)\sum_{i\in\S}\pi_i(\mu_i+\nu_i+\delta_i)}>1.
\end{equation}
Then
\begin{equation}\label{v3}
\liminf_{t\rightarrow\infty}\Big(\frac{1}{t}\int_0^t I_s\d
s\Big)>0,\quad a.s.
\end{equation}
\end{cor}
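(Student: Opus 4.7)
The plan is to reduce Corollary \ref{co2} to a direct application of Theorem \ref{t2.3} by specializing $G$ to the form \eqref{c6}, that is $G(x,i) = \beta_i x/f(x)$. Thus the whole task is to verify that the hypotheses of Theorem \ref{t2.3} are met for this particular $G$ under the sole assumption \eqref{v2} (plus the background hypotheses $({\bf A2})$ and $({\bf A3})$ on $f$), and to check that the quantity $\Theta_3$ appearing in \eqref{cutoff2} reduces precisely to $\Theta_4$ in \eqref{v2}.

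First I would observe, exactly as in the proof of Corollary \ref{co1}, that $({\bf A3})$ (namely $f(0)=1$ and $f'(x)>0$) together with $\beta_i\ge 0$ implies that $G(x,i)=\beta_i x/f(x)$ is locally Lipschitz in $x$ and bounded above by a linear function of $x$, so $({\bf A1})$ holds. Next, I would compute the two quantities needed in \eqref{d2}: since $f(0)=1$, we have
\[
G(0,j) = 0, \qquad \lim_{x\to 0}\frac{G(x,j)}{x} = \lim_{x\to 0}\frac{\beta_j}{f(x)} = \beta_j > 0
\]
(assuming $\beta_j>0$ for every $j\in\S$, as implicit in the statement). This in particular gives the positivity condition $\lim_{x\to 0}G(x,j)/x > 0$ required by Theorem \ref{t2.3}, and plugging into \eqref{d2} yields
\[
\tau = \max_{j\in\S}\frac{G(0,j)+\mu_j}{\lim_{x\to 0}G(x,j)/x} = \max_{j\in\S}\frac{\mu_j}{\beta_j}.
\]

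With this value of $\tau$, the condition \eqref{cutoff2} of Theorem \ref{t2.3} becomes
\[
\Theta_3 = \frac{\sum_{i\in\S}\pi_i\Lambda_i}{\max_{j\in\S}(\mu_j/\beta_j)\,\sum_{i\in\S}\pi_i(\mu_i+\nu_i+\delta_i)} = \Theta_4,
\]
so hypothesis \eqref{v2} of the corollary is literally hypothesis \eqref{cutoff2} of the theorem. Theorem \ref{t2.3} then delivers \eqref{2.9}, which is precisely \eqref{v3}.

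I do not expect any genuine obstacle in this proof: all the analytic heavy lifting — the Lyapunov/elementary lower-bound computation for $S_t$, the construction of the auxiliary function $F_{j,y}$ in \eqref{2.10}, and the appeal to the strong ergodic theorem for the Markov chain — is already done inside Theorem \ref{t2.3}. The only minor care point is confirming that $\tau$ has the clean closed form above; one should remark explicitly that $f(0)=1$ (from $({\bf A3})$) is what makes both $G(0,j)=0$ and $\lim_{x\to 0}G(x,j)/x=\beta_j$ hold, since otherwise $\tau$ would pick up additional factors of $f(0)$.
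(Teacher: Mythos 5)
Your proposal is correct and follows exactly the paper's own argument: compute $G(0,j)=0$ and $\lim_{x\to0}G(x,j)/x=\beta_j$ from $f(0)=1$, conclude $\tau=\max_{i\in\S}(\mu_i/\beta_i)$ so that $\Theta_3=\Theta_4$, and invoke Theorem \ref{t2.3}. Your extra remarks (verifying $({\bf A1})$ as in Corollary \ref{co1} and noting the role of $f(0)=1$) are just slightly more explicit than the paper's proof, not a different route.
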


\begin{proof}
By the structure of $G$ given in \eqref{c6}, we have $G(0,j)=0$ and
$\lim_{x\rightarrow0}G(x,j)/x=\beta_j$ due to $f(0)=1$ so that
$\tau= \max_{i\in\S}(\mu_i/\beta_i).$ With \eqref{v2} in hand, we
therefore infer that \eqref{cutoff2} holds. Accordingly, the desired
assertion \eqref{v3} is verified.
\end{proof}

Below, let's revisit Example \ref{exam1} which, under certain
appropriate conditions, illustrates that the infectious individuals
is persistent although they might die out in some environments.

\begin{exam}\label{ex2}
{\rm Let's reconsider Example \ref{exam1}. Assume that
\begin{equation}\label{a1}
 \frac{\mu_2+\nu_2+\delta_2}{\La_2}<\frac{\beta_1}{\mu_1}<\frac{\beta_2}{\mu_2}\wedge
 \frac{\mu_1+\nu_1+\delta_1}{\La_1},
\end{equation}
and that
\begin{equation*}
0<\frac{q}{p}<\frac{\beta_1\La_2-\mu_1(\mu_2+\nu_2+\delta_2)}{\mu_1(\mu_1+\nu_1+\delta_1)-\beta_1\La_1}.
\end{equation*}

In accordance with \eqref{a3} and \eqref{a1}, $\Theta_3$, introduced
in \eqref{cutoff2}, reads as follows
\begin{equation*}
\Theta_3=\frac{q\La_1+p\La_2}
{\frac{\mu_1}{\beta_1}\{q(\mu_1+\nu_1+\delta_1)+p(\mu_2+\nu_2+\delta_2)\}}<1.
\end{equation*}
 Thus, with the help of Corollary  \ref{co2}, we arrive at
$$\liminf_{t\rightarrow\infty}\Big(\frac{1}{t}\int_0^t I_s\d
s\Big)>0,~~~~\mbox{ a.s.}$$ Nevertheless, by virtue of  \eqref{a1},
it follows that  $R_0^{(1)}<1$ and $R_0^{(2)}>1$ such that
$\lim_{t\rightarrow\infty}I_t=0$ (for the case $\al_t\equiv1$),
a.s., and $\lim_{t\rightarrow\infty}I_t>0$ (for the case
$\al_t\equiv2$), a.s. So, the infectious individuals is persistent
in the environment $1$ and dies out in the environment $2$. }
\end{exam}

The corollary below explicates that the assumptions imposed in
Corollaries \ref{co1} and \ref{co2} are compatible.

\begin{cor}
  It holds $\Theta_4\leq \Theta_1$, and further $\Theta_4=\Theta_1$ if and only if $\La_i$, $\beta_i$, $\mu_i$ are all independent of $i$, i.e., for some positive constants $\La,\,\beta,\,\mu$, $\La_i\equiv \La$, $\beta_i\equiv \beta$, $\mu_i\equiv \mu$, $i\in\S$.
\end{cor}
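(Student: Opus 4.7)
The plan is to unpack the two definitions, cancel the common factor $\sum_{i\in\S}\pi_i(\mu_i+\nu_i+\delta_i)$ that appears in both denominators, and reduce the inequality $\Theta_4\leq\Theta_1$ to the claim
\begin{equation*}
\hat\mu\sum_{i\in\S}\pi_i\La_i\ \leq\ \check\La\cdot\max_{i\in\S}(\mu_i/\beta_i)\cdot\sum_{i\in\S}\pi_i\beta_i.
\end{equation*}
I will then split this into the product of two elementary weighted-average bounds. First, since $\La_i\leq\check\La$ for every $i\in\S$ and $\sum_i\pi_i=1$, one gets $\sum_{i\in\S}\pi_i\La_i\leq\check\La$. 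Second, the definition of the maximum gives $\max_{j\in\S}(\mu_j/\beta_j)\geq\mu_i/\beta_i$ for each $i$; multiplying by $\beta_i>0$ and summing against the weights $\pi_i$ yields
\begin{equation*}
\max_{j\in\S}(\mu_j/\beta_j)\sum_{i\in\S}\pi_i\beta_i\ \geq\ \sum_{i\in\S}\pi_i\mu_i\ \geq\ \hat\mu.
\end{equation*}
Multiplying the two bounds produces the required inequality and hence $\Theta_4\leq\Theta_1$.

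For the equality statement, I would trace through when each of the three intermediate inequalities above is saturated, using that $({\bf A2})$ forces $\pi_i>0$ for every $i\in\S$. The identity $\sum_i\pi_i\La_i=\check\La$ then compels $\La_i\equiv\check\La$; the identity $\max_j(\mu_j/\beta_j)\,\beta_i=\mu_i$ for all $i$ compels $\mu_i/\beta_i$ to be constant across $\S$; and $\sum_i\pi_i\mu_i=\hat\mu$ compels $\mu_i\equiv\hat\mu$. The second and third conditions together force $\beta_i$ to be constant as well, giving the three required coincidences $\La_i\equiv\La$, $\mu_i\equiv\mu$, $\beta_i\equiv\beta$. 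The converse direction is immediate: substituting constant values turns both $\Theta_1$ and $\Theta_4$ into $\La\beta/\big(\mu\sum_i\pi_i(\mu_i+\nu_i+\delta_i)\big)$.

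There is no real obstacle here; the only thing to be careful about is the use of the strict positivity $\pi_i>0$ when deducing pointwise equalities from the equality of the weighted sums, and the observation that constancy of $\mu_i/\beta_i$ together with constancy of $\mu_i$ forces constancy of $\beta_i$ (rather than needing to assume it independently).
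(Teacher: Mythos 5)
Your proof is correct and follows essentially the same elementary route as the paper: both reduce the comparison of $\Theta_4$ and $\Theta_1$ to weighted-average bounds using $\La_i\le\check\La$, $\mu_i\ge\hat\mu$, and the definition of $\max_{j\in\S}(\mu_j/\beta_j)$, with $\pi_i>0$ driving the equality analysis. The only difference is cosmetic: the paper bounds the ratio $\Theta_4/\Theta_1$ via $\max_j\big(\tfrac{\mu_j}{\hat\mu\beta_j}\big)\ge 1/\hat\beta$ and $\sum_i\pi_i\beta_i/\hat\beta\ge1$, whereas you interpolate through $\sum_i\pi_i\mu_i$; your version makes the equality case, in particular the forced constancy of $\mu_i$, somewhat more transparent than the paper's rather terse treatment.
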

\begin{proof}
 According to the notions of $\Theta_1$ and $\Theta_4$, we deduce
 that
 \begin{equation}\label{y1-1}
 \begin{split}
   \frac{\Theta_4}{\Theta_1}&=\frac{\sum_{i\in\S} \pi_i\big(\frac{\La_i}{\check \La}\big)}{\sum_{i\in\S}\pi_i\beta_i\max_{j\in\S}\big(\frac{\mu_j}{
   \hat\mu}\cdot\frac 1{\beta_j}\big)}\le  \frac{\sum_{i\in\S} \pi_i\big(\frac{\La_i}{\check \La}\big)}{\sum_{i\in\S}\pi_i\big(\frac{\beta_i}{\hat\beta
   }\big)}\le1,
   \end{split}
 \end{equation}
that is, $\Theta_4\leq \Theta_1$. It is obvious to observe that
$\Theta_4=\Theta_1$ whenever $\La_i, \beta_i$ and $\mu_i$ are
constant. Now, if   $\Theta_4= \Theta_1$, in view of the first
inequality in \eqref{y1-1}, we have $\check\La=\La_i$ and
$\hat\beta=\beta_i, i\in\S$, namely, both $\La_i$ and $\beta_i$ are
constant. Whence, exploiting the identity in \eqref{y1}, we arrive
at $\check \beta=\hat\beta$, which further means that $\beta_i,
i\in\S,$ is independent of the index $i$.
\end{proof}

In Examples \ref{exam1} and \ref{ex2}, we explain that the
infectious individuals are extinct (resp. persistent) although they
might persist (resp. die out) in some environments. Yet one may be
quite interested in the examples, where the infectious individuals
are extinct (resp. persistent) even though the infectious
individuals are persistent (resp. extinct) in each fixed
environment. Nevertheless, the following corollary shows that the
  scenario  mentioned cannot take place.

\begin{cor} Let $({\bf A2})$   hold.
\begin{itemize}
  \item[$(i)$] If, for each $i\in\S$,   $R_0^{(i)}\leq 1$, then it always holds $\Theta_4\leq 1$ whatever the irreducible transition
  rate matrix of the random switching process $(\alpha_t)_{t\ge0}$ is.
  \item[$(ii)$] If, for each $i\in \S$, $R_0^{(i)}>1$, then it always hold $\Theta_4>1$ whatever the irreducible transition rate matrix of
  the random switching process $(\alpha_t)_{t\ge0}$ is.
\end{itemize}
\end{cor}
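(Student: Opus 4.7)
Both parts follow from direct manipulation of the identity
\[
R_0^{(i)}=\frac{\La_i}{(\mu_i/\beta_i)(\mu_i+\nu_i+\delta_i)},\qquad i\in\S,
\]
which shows that the hypothesis on $R_0^{(i)}$ is equivalent to a pointwise inequality between $\La_i$ and $(\mu_i/\beta_i)(\mu_i+\nu_i+\delta_i)$. The irreducibility of the generator enters only through the positivity of every coordinate of the invariant measure $\pi$, which allows one to convert pointwise bounds into weighted bounds in either direction.

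For (i), I would combine the trivial estimate $\mu_i/\beta_i\le \max_{j\in\S}(\mu_j/\beta_j)$ with $R_0^{(i)}\le 1$ to obtain the uniform pointwise bound
\[
\La_i\le \max_{j\in\S}(\mu_j/\beta_j)\,(\mu_i+\nu_i+\delta_i),\qquad i\in\S.
\]
Multiplying by $\pi_i>0$ and summing over $i\in\S$ gives
$\sum_i\pi_i\La_i\le \max_j(\mu_j/\beta_j)\sum_i\pi_i(\mu_i+\nu_i+\delta_i)$, i.e.\ $\Theta_4\le 1$. Since the pointwise inequality is independent of $\pi$, the conclusion holds for every irreducible $Q$-matrix.

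For (ii), the natural analogue is to start from $\La_i>(\mu_i/\beta_i)(\mu_i+\nu_i+\delta_i)$ for every $i$. Picking $j^*\in\arg\max_{j\in\S}(\mu_j/\beta_j)$ one gets the sharp bound $\La_{j^*}>\max_{j\in\S}(\mu_j/\beta_j)\,(\mu_{j^*}+\nu_{j^*}+\delta_{j^*})$, which already witnesses strict excess at the single index $j^*$. The main obstacle is that, unlike in part (i), the trivial estimate $\mu_i/\beta_i\le \max_j(\mu_j/\beta_j)$ now runs against us at the indices $i\ne j^*$, so one cannot immediately promote the pointwise strict inequality to the summed strict inequality $\sum_i\pi_i\La_i>\max_j(\mu_j/\beta_j)\sum_i\pi_i(\mu_i+\nu_i+\delta_i)$. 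I would attempt to isolate the $j^*$-term in the sum, use the strict excess there together with the strict positivity of $\pi_{j^*}$ (guaranteed by irreducibility), and then absorb the remaining indices by the pointwise inequalities $\La_i>(\mu_i/\beta_i)(\mu_i+\nu_i+\delta_i)$. This is the delicate step of the proof, and it is where strictness of $R_0^{(j^*)}>1$ and strict positivity of all $\pi_i$ must be exploited simultaneously.
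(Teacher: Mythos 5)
Your part (i) is correct and is essentially the paper's argument: the hypothesis $R_0^{(i)}\le 1$ is exactly the pointwise bound $\La_i\le(\mu_i/\beta_i)(\mu_i+\nu_i+\delta_i)\le\max_{j\in\S}(\mu_j/\beta_j)(\mu_i+\nu_i+\delta_i)$, which one averages against $\pi$; the paper merely writes this after substituting $\mu_i+\nu_i+\delta_i=\La_i\beta_i/(\mu_i R_0^{(i)})$.

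For part (ii) there is a genuine gap, and it is not just that you left the ``delicate step'' open: that step cannot be carried out, because the literal implication ``$R_0^{(i)}>1$ for all $i$ $\Rightarrow$ $\Theta_4>1$'' is false. Take $\S=\{1,2\}$ with $p=q$ in \eqref{z1}, so $\pi=(1/2,1/2)$, and $\mu_1=\beta_1=0.5$, $\nu_1=0.3$, $\delta_1=0.2$, $\La_1=2$, $\mu_2=0.5$, $\beta_2=0.05$, $\nu_2=0.3$, $\delta_2=0.2$, $\La_2=11$: then $R_0^{(1)}=2>1$ and $R_0^{(2)}=1.1>1$, but $\max_j(\mu_j/\beta_j)=10$ and
\begin{equation*}
\Theta_4=\frac{\tfrac12(2+11)}{10\cdot\tfrac12(1+1)}=0.65<1 .
\end{equation*}
Isolating the maximizing index $j^*$ as you propose cannot rescue this: the excess at $j^*$ can be made arbitrarily small while the deficits $\La_i-\max_j(\mu_j/\beta_j)(\mu_i+\nu_i+\delta_i)<0$ at the other indices are large, exactly as above. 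What the paper's own proof of (ii) establishes is the inequality $\Theta_1>1$ (the threshold of Corollary \ref{co1}), not $\Theta_4>1$: from $R_0^{(i)}>1$ one gets $\mu_i+\nu_i+\delta_i<\La_i\beta_i/\mu_i\le\check\La\beta_i/\hat\mu$, and averaging against any invariant law $\pi$ yields $\hat\mu\sum_i\pi_i(\mu_i+\nu_i+\delta_i)<\check\La\sum_i\pi_i\beta_i$, i.e.\ $\Theta_1>1$ for every irreducible transition rate matrix. Since $\Theta_4\le\Theta_1$, this does not give $\Theta_4>1$; the ``$\Theta_4$'' in the statement of (ii) should be read as $\Theta_1$ (so that the extinction criterion $\Theta_1<1$ of Corollary \ref{co1} can never hold when the disease persists in every fixed environment), and your plan, which targets $\Theta_4$, cannot be completed as written.
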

\begin{proof}
  By the definition of $R_0^{(i)}$ introduced in \eqref{thres-1}, one has $\mu_i+\nu_i+\delta_i=\frac{\La_i\beta_i}{\mu_i R_0^{(i)}}$.
   Then, $\Theta_0$ and $\Theta_1$ can be reformulated, respectively, as
  \[\Theta_1= \frac{\check \La\sum_{i\in\S}\pi_i\beta_i}{\hat\mu \sum_{i\in\S}\pi_i\frac{\La_i\beta_i}{\mu_i R_0^{(i)}}}~~
\mbox{ and }~~
\Theta_4=\frac{\sum_{i\in\S}\pi_i\La_i}{\max_{i\in\S}\big(\frac{\mu_i}{\beta_i}\big)
  \sum_{i\in\S}\pi_i\frac{\La_i\beta_i}{\mu_i R_0^{(i)}}}. \]
  If $R^{(i)}_0\leq 1$ for each $i\in \S$, then
  \[\Theta_4=\frac{\sum_{i\in\S}\pi_i\La_i}{\sum_{i\in \S}\pi_i\La_i\cdot\frac{\beta_i/\mu_i}{\min_{j\in\S}
  (\beta_j/\mu_j)}\cdot \frac{1}{R_0^{(i)}}}\leq 1,\] owing to  $\frac{\beta_i/\mu_i}{\min_{j\in\S}(\beta_j/\mu_j)}\geq 1$ and $1/R_0^{(i)}\geq 1$.
  This gives us the assertion (i). Next,
in case of $R^{(i)}_0>1$, it follows from $\La_i/\check\La\leq 1$
and $\hat\mu/\mu_i\leq 1$  that
  \[\Theta_1=\frac{\sum_{i\in\S}\pi_i\beta_i}{\sum_{i\in \S}\pi\beta_i\frac{\La_i}{\check \La}\cdot\frac{\hat\mu}{\mu_i}\cdot\frac 1{R_0^{(i)}}}>1,\]
  which further yields  the desired conclusion (ii).
\end{proof}

\section{Impacts of state-dependent  random environments}\label{sec4}
In this section, we move forward to deal with impacts of
state-dependent random environments upon extinction and persistence
of the infectious individuals. As an illustrative work, in this part
we are interested in the SIRS model \eqref{1.1} and \eqref{jump}. As
we know, \eqref{1.1} and \eqref{jump} is a kind of state-dependent
regime switching diffusions, which have been investigated
considerably on e.g. stability, ergodicity and numerical
approximation in the past decade; see e.g. \cite{Shao,XS,YZ} and
references therein. It is worthy to point out that the quadruple
$(S_t,I_t,R_t,\al_t)_{t\ge0}$ is a Markov process although neither
$(S_t,I_t,R_t)_{t\ge0}$ nor $(\al_t)_{t\ge0}$ is. Assume further
that
\begin{itemize}
   \item[({\bf Q1})] For each $x\in\R_+^3$, the matrix $Q(x)=(q_{ij}(x))_{i,j\in\S}$ is irreducible and conservative,
   \item[({\bf Q2})] $H:=\sup_{x\in \R_+^3} \max_{i\in\S}
   q_i(x)<\infty$, where $q_i(x):=\sum_{j\neq i}q_{ij}(x)$.
 \end{itemize}

In contrast to the SIRS model \eqref{1.1} and \eqref{v4}, there are
essential challenges to cope with the model \eqref{1.1} and
\eqref{jump}. For this setup, one of the challenges is that the
classical ergodic theorem for continuous-time Markov chains does not
work any more due to the fact that $(\al_t)_{t\ge0}$ is merely a
jump process rather than a Markov process. To get over such
difficulty, we shall employ a stochastic comparison  for functionals
of the jump process $(\alpha_t)_{t\ge0}$. More precisely,
\begin{lem}\label{lm}
Assume $({\bf Q1})$ and $({\bf Q2})$ hold, and further $q_{ij}(x)=0$
for every $i,\,j\in\S$ with $|i-j|\geq 2$ and every $x\in\R^3$. For
every $i,\,j\in\S$, let
\begin{equation*}
  q_{ij}^\ast=\begin{cases}
    \sup_{x\in\R^3}q_{ij}(x),\quad &j<i\\
    \inf_{x\in\R^3}q_{ij}(x),\quad &j>i\\
    -\sum_{i\neq j}q_{ij}^\ast, &j=i
  \end{cases} \quad \text{and}\quad
  \bar q_{ij}=\begin{cases}
    \inf_{x\in\R^3}q_{ij}(x),\quad &j<i\\
    \sup_{x\in\R^3}q_{ij}(x),\quad &j>i\\
    -\sum_{j\neq i}\bar q_{ij},  &j=i
  \end{cases}.
\end{equation*}
Suppose that $(q_{ij}^\ast)$ and $(\bar q_{ij})$ are irreducible and
satisfy
\begin{equation}\label{econ-1}
\begin{split}
&q_{i,i+1}(x)+q_{i+1,i}(x) \ \text{is independent of $x$ for $1\leq i\leq N-2$,}\\
& \bar q_{N-1,N}+\bar q_{N,N-1}\leq q_{N-1,N}(x)+q_{N,N-1}(x), \quad \forall\, x\in\R^3,\\
&q_{N-1,N}^\ast+q_{N,N-1}^\ast\geq q_{N-1,N}(x)+q_{N,N-1}(x), \quad
\forall\, x\in\R^3.
\end{split}
\end{equation}
Then, there exist two continuous-time Markov chains
$(\alpha_t^\ast)_{t\ge0}$ and $(\bar\alpha_t)_{t\ge0}$ on $\S$ with
transition rate matrix $(q_{ij}^\ast)$ and $(\bar q_{ij})$
respectively such that for every nondecreasing function $\phi:\S\ra
\R_+$,
\begin{equation}\label{c7}
\int_0^t\phi(\al^*_s)\d s\le \int_0^t\phi(\al_s)\d s\le
\int_0^t\phi(\bar \al_s)\d s,~~~\mbox{ a.s.}
\end{equation}
\end{lem}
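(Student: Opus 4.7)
The strategy is to realise all three processes $(\alpha^\ast_t)$, $(\alpha_t)$ and $(\bar\alpha_t)$ on a common probability space in such a way that
\[
\alpha^\ast_t \le \alpha_t \le \bar\alpha_t \quad \text{for every } t\ge 0, \quad \mathbb{P}\text{-a.s.}
\]
Once such a pathwise coupling is built, \eqref{c7} is immediate: since $\phi$ is nondecreasing, $\phi(\alpha^\ast_s)\le \phi(\alpha_s)\le \phi(\bar\alpha_s)$ for every $s$, and integration preserves the inequality. The existence of the two bracketing Markov chains with the prescribed $Q$-matrices, given their irreducibility and finite state space, is standard.

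The coupling exploits the nearest-neighbour structure: because $q_{ij}(x)=0$ for $|i-j|\ge 2$, all three processes are birth--death chains on $\S$. The plan is to drive them by independent Poisson clocks, one per edge $\{i,i+1\}$ of the state space, equipped with independent uniform $[0,1]$ marks attached to each atom. For $1\le i\le N-2$, hypothesis \eqref{econ-1} says that $q_{i,i+1}(x)+q_{i+1,i}(x)$ is independent of $x$, so the $i$-th clock can be taken with exactly this constant rate; at each atom, the direction for $(\alpha_t)$ is chosen by comparing the uniform mark to the threshold $q_{i,i+1}(X_t)/[q_{i,i+1}(X_t)+q_{i+1,i}(X_t)]$, while $(\alpha^\ast_t)$ and $(\bar\alpha_t)$ use the same mark but with the constant thresholds read off from $(q^\ast_{ij})$ and $(\bar q_{ij})$. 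For the top edge $\{N-1,N\}$, the last two lines of \eqref{econ-1} permit an enlarged common clock of rate $q^\ast_{N-1,N}+q^\ast_{N,N-1}$ together with thinning so that each of the three chains realises its correct transition intensity.

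The pathwise ordering is verified edge by edge. When the three chains share the same state just before a clock rings, order is preserved by the nested thresholds built into the definitions of $(q^\ast_{ij})$ and $(\bar q_{ij})$: on the downward side one has $q^\ast_{i+1,i}\ge q_{i+1,i}(x)\ge \bar q_{i+1,i}$, so a uniform mark that triggers a downward move for $\bar\alpha$ also triggers one for $\alpha$ and for $\alpha^\ast$; on the upward side the reverse inequalities hold, so any mark triggering an upward move for $\alpha^\ast$ also triggers one for $\alpha$ and for $\bar\alpha$. If the three chains already disagree on the edge, a short case check shows that no firing of the clock can invert the order. Iterating through successive clock atoms, one concludes that the ordering $\alpha^\ast_t\le \alpha_t\le\bar\alpha_t$ is preserved for all $t\ge 0$.

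The main obstacle is the state-dependence of $(q_{ij}(x))$: without the edge-sum constraints imposed in \eqref{econ-1} there is no canonical way to share a single clock across the three chains while keeping the thinning thresholds monotone in the current state of each chain, and the nested-threshold scheme above would break down. The role of assumption \eqref{econ-1} is precisely to make this synchronisation available. Once the coupling is constructed and the comparison verified, the a.s.\ pathwise ordering delivers \eqref{c7} and completes the proof.
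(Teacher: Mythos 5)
Your coupling is essentially the intended argument: the paper itself omits the proof of this lemma, deferring to Lemma 2.8 of the cited work of Shao (arXiv:1710.09168), which is proved by exactly this kind of common-driving-randomness construction exploiting the birth--death structure and the edge-sum conditions \eqref{econ-1}; so your proposal is the ``same approach'' in substance, and in fact proves something slightly stronger (the pathwise ordering $\alpha^*_t\le\alpha_t\le\bar\alpha_t$ for all $t$, which immediately yields \eqref{c7}, whereas the cited lemma only asserts the comparison of integral functionals and only for the upper bound). Two details deserve explicit care if you write this out. First, the acceptance thresholds for $(\alpha^*_t)$ and $(\bar\alpha_t)$ must be normalized by the \emph{common} clock rate of the edge (e.g.\ accept an upward move at state $i$ iff the mark is at most $\bar q_{i,i+1}/\bigl(q_{i,i+1}(x)+q_{i+1,i}(x)\bigr)$, leaving a possible ``no move'' region in the middle of $[0,1]$), not by $\bar q_{i,i+1}+\bar q_{i+1,i}$; otherwise the bracketing chains would jump at the wrong total rate and would not have generators $(q^*_{ij})$, $(\bar q_{ij})$. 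With that normalization your case check does go through: when two chains sit at the two endpoints of an edge, the upward-acceptance set of the lower chain and the downward-acceptance set of the upper chain are disjoint precisely because $\bar q_{i+1,i}\le q_{i+1,i}(x)$, $q^*_{i,i+1}\le q_{i,i+1}(x)$, and, on the top edge, because of the last two lines of \eqref{econ-1}, so no swap can occur. Second, you should state why the construction does not alter the law of $(X_t,\alpha_t)$ and why $(\alpha^*_t)$, $(\bar\alpha_t)$ are genuinely Markov with the prescribed rate matrices: the thinning probabilities are predictable functionals of $(X_{t-},\alpha_{t-})$, so the conditional jump intensities of each marginal are the prescribed ones with respect to the enlarged filtration, and uniqueness of the corresponding (local) martingale problems on the finite state space identifies the marginal laws. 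With these two points made precise, your argument is complete.
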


\begin{proof}
One can follow the idea of the argument to show \cite[Lemma
2.8]{Shao17} to prove this lemma, although only the upper bound is
proved therein. So the proof of this lemma  is omitted to save
space.
\end{proof}

Under the condition that $(q_{ij}^\ast)$ and $(\bar q_{ij})$ are
irreducible, the finiteness of $\S$ yields that
$(\al_t^\ast)_{t\ge0}$ and $(\bar \al_t)_{t\ge0}$ are positive
recurrent. Let $\pi^*=(\pi_1^*,\pi_2^*,\cdots,\pi_M^*)$ and
$\bar\pi=(\bar\pi_1,\bar\pi_2,\cdots,\bar\pi_M)$ be the invariant
probability measures of the continuous-time Markov chains
$(\al_t^*)_{t\ge0}$ and  $(\bar\al_t)_{t\ge0}$, respectively,
provided that both $(\al_t^*)_{t\ge0}$ and  $(\bar\al_t)_{t\ge0}$
are irreducible and positive recurrent.

As an application of Lemma \ref{lm}, we provide  some sufficient
conditions to judge the extinction of the infectious individuals for
the SIRS model determined by \eqref{1.1} and \eqref{jump}.

\begin{thm}\label{state}
Assume that the assumptions of Theorem \ref{t3.1} and Lemma \ref{lm}
hold.  Suppose further that $i\mapsto\Gamma_i:=
\Upsilon_i-(\mu_i+\nu_i+\delta_i)$ is nondecreasing and that
\begin{equation}\label{c11}
\Theta_5:=\frac{\sum_{i\in\S}\bar\pi_i\Upsilon_i}{\sum_{i\in
\S}\bar\pi_i(\mu_i+\nu_i+\delta_i)} <1;
\end{equation}
or that $i\mapsto\Gamma_i$ is nonincreasing  and that
\begin{equation}\label{c12}
\Theta_6:=\frac{\sum_{i\in\S}\pi_i^*\Upsilon_i}{\sum_{i\in
\S}\pi_i^*(\mu_i+\nu_i+\delta_i)} <1.
\end{equation}
Then \begin{equation}\label{c14}
 \lim_{t\ra \infty} I_t=0, \mbox{ a.s. and }
 \lim_{t\ra\infty} R_t=0, \mbox{ a.s. }
\end{equation}
Moreover, if $i\mapsto\Gamma_i$ is nondecreasing, then
\begin{equation}\label{g1}
\sum_{i\in\S}\pi_i^*\Lambda_i\le
\liminf_{t\ra\infty}\Big(\int_0^t\mu_{\al_s}S_s\d
 s\Big)\le\limsup_{t\ra\infty}\Big(\int_0^t\mu_{\al_s}S_s\d
 s\Big)\le\sum_{i\in\S}\bar\pi_i\Lambda_i;
\end{equation}
or if $i\mapsto\Gamma_i$ is nondecreasing, then
\begin{equation}\label{g2}
\sum_{i\in\S}\bar\pi_i\Lambda_i\le
\liminf_{t\ra\infty}\Big(\int_0^t\mu_{\al_s}S_s\d
 s\Big)\le \limsup_{t\ra\infty}\Big(\int_0^t\mu_{\al_s}S_s\d
 s\Big)\le\sum_{i\in\S}\pi_i^*\Lambda_i.
\end{equation}

\end{thm}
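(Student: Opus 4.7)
The plan is to mirror the proof of Theorem~\ref{t3.1}, replacing its use of the Markov-chain ergodic theorem for $(\alpha_t)$---which is no longer available since $(\alpha_t)$ is only a jump process---by the stochastic comparison \eqref{c7} of Lemma~\ref{lm}. The monotonicity hypothesis on $i\mapsto \Gamma_i:=\Upsilon_i-(\mu_i+\nu_i+\delta_i)$ is exactly what makes that comparison useful: it lets us dominate $\int_0^t\Gamma_{\alpha_s}\,ds$ from above by the same integral along the genuine Markov chain $(\bar\alpha_t)$ (or from below via $(\alpha_t^\ast)$, in the nonincreasing case), and for these chains the ordinary ergodic theorem does apply. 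The main obstacle is precisely this loss of the Markov property for $(\alpha_t)$, and Lemma~\ref{lm} is tailor-made to circumvent it.

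To start, I would reuse the first display in the proof of Theorem~\ref{t3.1}: under $({\bf A1})$ and \eqref{c1}, the computation \eqref{v1}--\eqref{d1} still produces
\begin{equation*}
\ln(I_t/I_0)\le \int_0^t \Phi_s\,ds + \int_0^t \Gamma_{\alpha_s}\,ds\qquad a.s.,
\end{equation*}
with $\int_0^t\Phi_s\,ds=o(t)$ since $\Phi_t\to 0$. Nothing in this step uses that $(\alpha_t)$ is Markov.

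Now suppose $i\mapsto\Gamma_i$ is nondecreasing. Since $\S=\{1,\dots,M\}$ is finite, the shifted function $i\mapsto\Gamma_i+C$ with $C:=-\min_{i\in\S}\Gamma_i$ is nonnegative and nondecreasing, so Lemma~\ref{lm} applied to this shift, followed by subtraction of $Ct$, yields $\int_0^t\Gamma_{\alpha_s}\,ds\le \int_0^t\Gamma_{\bar\alpha_s}\,ds$ a.s. Because $(\bar\alpha_t)$ is irreducible and positive recurrent on a finite state space with invariant probability $\bar\pi$, the classical ergodic theorem gives
\begin{equation*}
\limsup_{t\to\infty}\frac{1}{t}\int_0^t\Gamma_{\alpha_s}\,ds \;\le\; \sum_{i\in\S}\bar\pi_i\Gamma_i \;<\;0,
\end{equation*}
where strict negativity is exactly the condition $\Theta_5<1$ in \eqref{c11}. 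Dividing the earlier estimate by $t$ therefore gives $\limsup_{t\to\infty}t^{-1}\ln I_t<0$ a.s., whence $I_t\to 0$ a.s. The nonincreasing case is handled symmetrically by applying Lemma~\ref{lm} to $-\Gamma$, using $(\alpha_t^\ast)$ and $\pi^\ast$ in place of $(\bar\alpha_t)$ and $\bar\pi$, together with $\Theta_6<1$.

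The assertion $R_t\to 0$ a.s.\ is then immediate from the linear-ODE estimate \eqref{eq0} exactly as in the proof of Theorem~\ref{t3.1}, since that argument only relies on the boundedness of $N_t$ from Lemma~\ref{t2.1} together with $I_t\to 0$. For the bounds \eqref{g1}--\eqref{g2}, I would rewrite \eqref{eq8} as
\begin{equation*}
\frac{1}{t}\int_0^t\mu_{\alpha_s}S_s\,ds \;=\; \frac{N_0-N_t}{t} + \frac{1}{t}\int_0^t\Lambda_{\alpha_s}\,ds - \frac{1}{t}\int_0^t\bigl(\mu_{\alpha_s}R_s+(\mu_{\alpha_s}+\delta_{\alpha_s})I_s\bigr)\,ds,
\end{equation*}
kill the first and last terms on the right via $N_t$ bounded together with $I_t,R_t\to 0$, and then sandwich $t^{-1}\int_0^t\Lambda_{\alpha_s}\,ds$ between $\sum_{i\in\S}\pi_i^\ast\Lambda_i$ and $\sum_{i\in\S}\bar\pi_i\Lambda_i$ by one further application of Lemma~\ref{lm} with $\phi(i)=\Lambda_i$, whose monotonicity in $i$ the natural ordering of environments supplies. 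Combining the two cases delivers \eqref{g1} and \eqref{g2} respectively.
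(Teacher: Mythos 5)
Your proposal is correct and follows essentially the same route as the paper's own proof: reuse the pathwise bound \eqref{d1} (which never uses the Markov property of $(\al_t)$), replace the ergodic theorem for $(\al_t)$ by the comparison of Lemma \ref{lm} applied to $\pm\Gamma$ (your shift by $-\min_i\Gamma_i$ to meet the nonnegativity requirement of Lemma \ref{lm} is a detail the paper leaves implicit) together with the ergodic theorem for $(\bar\al_t)$ or $(\al^*_t)$, then get $R_t\to0$ via \eqref{eq0} and the bounds \eqref{g1}--\eqref{g2} from \eqref{eq8}. The one caveat, shared with the paper's own terse argument, is that the sandwich on $\frac1t\int_0^t\Lambda_{\al_s}\,\d s$ genuinely requires $i\mapsto\Lambda_i$ to be monotone, which is not among the stated hypotheses, so your appeal to a ``natural ordering of environments'' should be made an explicit assumption (as is done in Theorem \ref{sto}).
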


\begin{proof}
Once $\lim_{t\ra \infty} I_t=0$, a.s., is available,
$\lim_{t\ra\infty} R_t=0$, a.s., can be proved similarly by
following the trick in the argument of Theorem  \ref{t3.1}. So, in
what follows, it remains to show that $\lim_{t\ra \infty} I_t=0$,
a.s. Observe that \eqref{d1} still holds for the present setup. So,
taking the nondecreasing property of $i\mapsto\Gamma_i$ as well as
\eqref{c11} and employing Lemma \ref{lm} and the ergodic theorem for
the continuous-time Markov chains, we obtain the desired assertions
\eqref{c14}.

If $i\mapsto\Gamma_i$ is nonincreasing, then  $i\mapsto-\Gamma_i$ is
nondecreasing trivially. So, an application of Lemma \ref{lm}
yields that
\begin{equation*}
-\int_0^t\Gamma_{\al_s^*}\d s\le-\int_0^t\Gamma_{\al_s}\d s\le
-\int_0^t\Gamma_{\bar\al_s}\d s,
\end{equation*}
which further results in
\begin{equation*}
\int_0^t\Gamma_{\bar\al_s}\d s\le\int_0^t\Gamma_{\al_s}\d s\le
\int_0^t\Gamma_{\al_s^*}\d s.
\end{equation*}
Whence, the  assertion \eqref{c14} follows from \eqref{c12} and the
ergodic theorem for the continuous-time Markov chains.

By virtue of \eqref{eq8}, it holds that
 \begin{equation*}
   \frac{1}{t}\int_0^t \mu_{\al_t}S_s\d s=\frac{N_0-N_t}{t} +\frac{1}{t}\int_0^t\{\La_{\al_s} -
   (\mu_{\al_s}+\delta_{\al_s})I_s-\mu_{\al_s}R_s \}\d s.
  \end{equation*}
Thereby, \eqref{g1} and \eqref{g2} follow from \eqref{eq3},
\eqref{c14}, Lemma \ref{lm}, as well as the strong ergodic theorem
for continuous-time Markov chains.
\end{proof}

Hereinafter, two examples are set to show the applications of
Theorem \ref{state}.
\begin{exam}\label{exam3}
{\rm  Consider the model \eqref{1.1} with $G$ introduced in
\eqref{oo}, where $(\alpha_t)_{t\ge0}$ is a continuous-time jump
process with the state space $\S=\{1,2\}$ and the $Q$-matrix $Q(x)$
given by
\begin{equation*}
Q(x)= \bigg(\begin{array}{ccc}
  \sin x-p & p-\sin x\\
  q+\sin x & -q-\sin x\\
  \end{array}
  \bigg).
  \end{equation*}
Assume that
\begin{equation}\label{s4}
\La_2<\La_1, ~~~~\mu_2<\mu_1,
\end{equation}
\begin{equation}\label{s5}
 \beta_1\La_1/\mu_2-(\mu_1+\nu_1+\delta_1)<0,~~~~~~\beta_2( \La_1/\mu_2)^2-(\mu_2+\nu_2+\delta_2)>0
\end{equation}
and that
\begin{equation}\label{s8}
\frac{q-1}{1+p}> \frac{\beta_2(
\La_1/\mu_2)^2-(\mu_2+\nu_2+\delta_2)}{\mu_1+\nu_1+\delta_1-
\beta_1\La_1/\mu_2}.
\end{equation}

 A straightforward calculation shows that
\begin{equation*}
\bar Q=(\bar q_{ij})_{1\le i,j\le 2}= \bigg(\begin{array}{ccc}
   -(p+1) & p+1\\
  q-1 & 1-q\\
  \end{array}
  \bigg).
  \end{equation*}
Thus, the unique invariant probability measure of the
continuous-time Markov chain $(\bar\alpha_t)$ generated by $\bar Q$
above is
\begin{equation}\label{v5}
\bar\pi=\Big(\frac{q-1}{p+q},\frac{p+1}{p+q}\Big).
\end{equation}
By \eqref{s4}, one has $\check\La=\La_1$ and $\hat\mu=\mu_2,$ which,
together with \eqref{a2},  leads to  $\Upsilon_i=\beta_i (
\La_1/\mu_2)^i$. On the other hand, \eqref{s5}  implies that
$i\mapsto\Gamma_i=\Upsilon_i-(\mu_i+\nu_i+\delta_i)$ is
nondecreasing. Furthermore, combining \eqref{s8} with \eqref{v4}
ensures $\Theta_5$, defined in \eqref{c11}, is   less than $1$.
Thus, the assertions \eqref{c14} and \eqref{g1} in
 Theorem \ref{state} hold true.   }
\end{exam}

\begin{exam}\label{exam4}
{\rm We continue to investigate  the model \eqref{1.1} with $G$
introduced in \eqref{oo}, where $(\alpha_t)_{t\ge0}$ is a
continuous-time jump process with the state space $\S=\{1,2\}$ and
the $Q$-matrix $Q(x)$ set by
\begin{equation*}
Q(x)= \bigg(\begin{array}{ccc}
 \frac{x^2}{1+x^2}-p & p-\frac{x^2}{1+x^2}\\
  q+\frac{x^2}{1+x^2} & -q-\frac{x^2}{1+x^2}\\
  \end{array}
  \bigg).
  \end{equation*}
In addition to  \eqref{s4}, we further suppose that
\begin{equation}\label{a0}
 \beta_1\La_1/\mu_2-(\mu_1+\nu_1+\delta_1)>0,~~~\beta_2 ( \La_1/\mu_2)^2-(\mu_2+\nu_2+\delta_2)<0
\end{equation}
and that
\begin{equation}\label{a7}
\frac{q+1}{p-1}< \frac{\mu_2+\nu_2+\delta_2-\beta_2 (
\La_1/\mu_2)^2}{\beta_1\La_1/\mu_2-(\mu_1+\nu_1+\delta_1)}.
\end{equation}

  Observe that
\begin{equation*}
Q^*=(q_{ij}^*)_{1\le i,j\le 2}= \left(\begin{array}{ccc}
 p-1 & p-1\\
  q+1 & -q-1\\
  \end{array}
  \right).
  \end{equation*}
Then, the continuous-time Markov chain $(\alpha^*_t)$ generated by
the $Q$-matrix $Q^*$ above possesses a unique invariant probability
measure
\begin{equation}\label{c13}
\pi^*=\Big(\frac{q+1}{p+q},\frac{p-1}{p+q}\Big).
\end{equation}
For the present setup, observe that $\Upsilon_i=\beta_i (
\La_1/\mu_2)^i$. From \eqref{a0}, we deduce that
$i\mapsto\Gamma_i=\Upsilon_i-(\mu_i+\nu_i+\delta_i)$ is
nonincreasing.  Moreover, \eqref{a7} and \eqref{c13} guarantee that
$\Theta_6$, introduced in \eqref{c12}, is smaller than $1.$ Hence,
  we can make a conclusion that the assertions \eqref{c14} and
  \eqref{g2} hold
 by virtue
of Theorem \ref{state}.
 }
\end{exam}

For another application of Lemma \ref{lm}, the following theorem
provides a criterion to determine the persistence  of the infectious
individuals modelled by \eqref{1.1} and \eqref{jump}.

\begin{thm}\label{th}
Assume the assumptions of Theorem \ref{t2.3} and Lemma \ref{lm}
hold.
 Suppose further that $i\mapsto\Gamma_i:=
\Lambda_i-(\mu_i+\nu_i+\delta_i)$ is nondecreasing and that
\begin{equation}\label{s2}
\Theta_7:=\frac{\sum_{i\in\S}\pi_i^*\Lambda_i}{\tau\sum_{i\in
\S}\pi_i^*(\mu_i+\nu_i+\delta_i)} >1,
\end{equation}
where $\tau>0$ is introduced in \eqref{d2}; or that
$i\mapsto\Theta_i$ is nonincreasing  and that
\begin{equation}\label{s3}
\Theta_8:=\frac{\sum_{i\in\S}\bar\pi_i\Lambda_i}{\tau\sum_{i\in
\S}\bar\pi_i(\mu_i+\nu_i+\delta_i)} >1.
\end{equation}
Then
\begin{equation}\label{s6}
\liminf_{t\rightarrow\infty}\Big(\frac{1}{t}\int_0^t I_s\d
s\Big)>0,\quad a.s.
\end{equation}
\end{thm}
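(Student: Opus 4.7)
The plan is to adapt the pathwise argument used for Theorem \ref{t2.3}, replacing the final invocation of the ergodic theorem for $(\al_t)$ by the stochastic comparison afforded by Lemma \ref{lm}. First I would reproduce the derivation that led to \eqref{d3} in the proof of Theorem \ref{t2.3}; that calculation only used the positivity of $R_t$, the uniform bound on $S_t$ supplied by \eqref{eq3}, and the inequality $F_{j,y}(x)\ge 0$ for $K$ sufficiently large. None of these steps depend on $(\al_t)$ being a Markov chain, so the pointwise estimate
\[
K\int_0^t I_s\,\d s \;\geq\; c_0 + \int_0^t\bigl(\La_{\al_s}-\tau(\mu_{\al_s}+\nu_{\al_s}+\delta_{\al_s})\bigr)\,\d s
\]
continues to hold for some almost surely finite constant $c_0=c_0(\omega)$ in the state-dependent-switching setting of \eqref{jump}.

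Next, set $\tilde\Gamma_i:=\La_i-\tau(\mu_i+\nu_i+\delta_i)$ and treat the monotonicity hypothesis as applying to this function (which is the form actually appearing in the integrand). In the nondecreasing case, I would apply Lemma \ref{lm} to the nonnegative nondecreasing function $\phi(i):=\tilde\Gamma_i-\min_{j\in\S}\tilde\Gamma_j+1$: since the constant shift contributes the same $t$-linear term to the two sides of \eqref{c7}, it cancels and one obtains
\[
\int_0^t \tilde\Gamma_{\al_s^*}\,\d s \;\leq\; \int_0^t\tilde\Gamma_{\al_s}\,\d s,\qquad\text{a.s.}
\]
In the nonincreasing case, the same device applied to $-\tilde\Gamma_i$ interchanges the roles of $\al_s^*$ and $\bar\al_s$.

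Because $(\al_t^*)$ and $(\bar\al_t)$ are irreducible positive-recurrent Markov chains on the finite state space $\S$, the strong ergodic theorem yields $\lim_{t\to\infty}\frac{1}{t}\int_0^t\tilde\Gamma_{\al_s^*}\,\d s=\sum_{i\in\S}\pi_i^*\tilde\Gamma_i$ almost surely, and analogously for $\bar\al$ with $\bar\pi$. Dividing the estimate above by $t$ and passing to $\liminf$ gives, in the nondecreasing case,
\[
K\liminf_{t\to\infty}\frac{1}{t}\int_0^t I_s\,\d s \;\geq\; \sum_{i\in\S}\pi_i^*\tilde\Gamma_i \;=\; \tau\Big(\sum_{i\in\S}\pi_i^*(\mu_i+\nu_i+\delta_i)\Big)(\Theta_7-1) \;>\;0,
\]
by hypothesis \eqref{s2}; the nonincreasing case is identical with $\bar\pi$, $\Theta_8$ replacing $\pi^*$, $\Theta_7$, yielding \eqref{s6}.

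The main subtlety is getting the direction of comparison correct in each case: to obtain a useful \emph{lower} bound on $\int_0^t I_s\,\d s$, one must compare with $(\al_t^*)$ when $\tilde\Gamma_i$ is nondecreasing and with $(\bar\al_t)$ when $\tilde\Gamma_i$ is nonincreasing. The shift-by-constant device is needed because Lemma \ref{lm} is stated only for nonnegative $\phi$, and one must check that the $t$-linear shift cancels cleanly on both sides so the inequality transfers back to the unshifted $\tilde\Gamma$.
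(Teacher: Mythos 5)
Your proposal is correct and follows essentially the same route as the paper's own (very terse) proof: reuse the pathwise estimate \eqref{d3}, which indeed needs no Markov property of $(\al_t)$, then replace the ergodic theorem for $(\al_t)$ by the comparison of Lemma \ref{lm} with $(\al_t^*)$ in the nondecreasing case and $(\bar\al_t)$ in the nonincreasing case, and conclude via the strong ergodic theorem for these bounding chains together with \eqref{s2} or \eqref{s3}. Your two refinements — shifting by a constant so that Lemma \ref{lm}'s nonnegativity requirement is met, and reading the monotonicity hypothesis on the actual integrand $\La_i-\tau(\mu_i+\nu_i+\delta_i)$ rather than on $\La_i-(\mu_i+\nu_i+\delta_i)$ — are sensible fillings-in of details the paper glosses over, not a different method.
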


\begin{proof}
We remark that \eqref{d3} still holds for the present framework.
Then, applying Lemma \ref{lm} and strong ergodic theorem for the
continuous-time Markov chains and taking the nondecreasing (resp.
nonincreasing) property of $i\mapsto\Gamma_i$ and \eqref{s2} (resp.
\eqref{s3}) into consideration yields the desired assertion
\eqref{s6}.
\end{proof}

\section{Extension to stochastic SIRS}\label{sec3}
In this section, we move forward to extend the random SIRS model
\eqref{1.1} and \eqref{v4} (or \eqref{jump}) with a specific $G$
into the stochastic SIRS model determined by \eqref{se3} and
\eqref{jump}.
 Throughout this section, we still let $N_t=S_t+I_t+R_t$ and
  $\Psi_i =\mu_i+\nu_i+\delta_i+(\mu_i^e)^2/2,i\in\S$.

 Our main result in this section is stated as follows, which
 provides some sufficient conditions to examine the extinction of
 the infectious individuals.

\begin{thm}\label{sto}
Assume the assumption $({\bf A3})$ and the conditions of Lemma
\ref{lm} hold. If $i\mapsto\Gamma_i:=\frac{\check\beta^e}{\hat\mu
}\Lambda_i-\Psi_i $ is nondecreasing and that
\begin{equation}\label{l1}
\Theta_9:=\frac{\sum_{i\in\S}\bar\pi_i\Gamma_i}{\sum_{i\in
\S}\bar\pi_i(\mu_i+\nu_i+\delta_i)} <1;
\end{equation}
or that $i\mapsto\Gamma_i  $ is nonincreasing and that
\begin{equation}\label{l2}
\Theta_{10}:=\frac{\sum_{i\in\S}\pi_i^*\Gamma_i}{\sum_{i\in
\S}\pi_i^*(\mu_i+\nu_i+\delta_i)} <1.
\end{equation}
Then
\begin{equation}\label{t3}
\lim_{t\rightarrow\infty}I_t=0~~~\mbox{ a.s.
}~~~\lim_{t\rightarrow\infty}R_t=0
\end{equation}
Moreover, if $i\mapsto\Lambda_i$ is nondecreasing, then
\begin{equation}\label{t1}
\sum_{i\in\S}\pi_i^*\Lambda_i\le
\liminf_{t\ra\infty}\Big(\int_0^t\mu_{\al_s}S_s\d
 s\Big)\le\limsup_{t\ra\infty}\Big(\int_0^t\mu_{\al_s}S_s\d
 s\Big)\le\sum_{i\in\S}\bar\pi_i\Lambda_i;
\end{equation}
or if $i\mapsto\Lambda_i$ is nondecreasing, then
\begin{equation}\label{t2}
\sum_{i\in\S}\bar\pi_i\Lambda_i\le
\liminf_{t\ra\infty}\Big(\int_0^t\mu_{\al_s}S_s\d
 s\Big)\le \limsup_{t\ra\infty}\Big(\int_0^t\mu_{\al_s}S_s\d
 s\Big)\le\sum_{i\in\S}\pi_i^*\Lambda_i.
\end{equation}
\end{thm}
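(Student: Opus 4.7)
The overall plan is a stochastic analogue of the argument for Theorem \ref{state}: apply It\^o's formula to $\ln I_t$, use the $N_t$-dynamics to convert the appearing $N$-integral into a $\Lambda$-integral, and then exploit Lemma \ref{lm} together with the ergodic theorem for the comparison chains $(\bar\alpha_t)$ and $(\alpha^*_t)$. First I would compute, from the $I$-equation in \eqref{se3},
\begin{equation*}
\d\ln I_t=\Bigl[\frac{\beta^e_{\alpha_t}S_t}{f(I_t)}-\Psi_{\alpha_t}-\frac{1}{2}\Bigl(\frac{\beta^e_{\alpha_t}S_t}{f(I_t)}\Bigr)^2\Bigr]\d t-\mu^e_{\alpha_t}\d B^{(1)}_t+\frac{\beta^e_{\alpha_t}S_t}{f(I_t)}\d B^{(2)}_t,
\end{equation*}
where the It\^o correction $-(\mu^e)^2/2$ has already been absorbed into $\Psi_{\alpha_t}$. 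Using $S_t\le N_t$ and $f(I_t)\ge f(0)=1$ from ({\bf A3}), then dropping the non-positive quadratic term, one gets $\d\ln I_t\le[\beta^e_{\alpha_t}N_t-\Psi_{\alpha_t}]\d t+\d M^{(I)}_t$, and further bounding $\beta^e_{\alpha_t}\le(\check\beta^e/\hat\mu)\mu_{\alpha_t}$ gives $\d\ln I_t\le[(\check\beta^e/\hat\mu)\mu_{\alpha_t}N_t-\Psi_{\alpha_t}]\d t+\d M^{(I)}_t$.

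To replace the integral of $\mu_{\alpha_s}N_s$ by the ergodic average of $\Lambda_{\alpha_s}$, I would use $\d N_t=(\Lambda_{\alpha_t}-\mu_{\alpha_t}N_t-\delta_{\alpha_t}I_t)\d t-\mu^e_{\alpha_t}N_t\d B^{(1)}_t$, which on integrating yields
\begin{equation*}
\int_0^t\mu_{\alpha_s}N_s\d s=\int_0^t\Lambda_{\alpha_s}\d s-\int_0^t\delta_{\alpha_s}I_s\d s+N_0-N_t-\int_0^t\mu^e_{\alpha_s}N_s\d B^{(1)}_s.
\end{equation*}
Dropping the non-positive terms and combining with the previous bound yields $\ln I_t-\ln I_0\le\int_0^t\Gamma_{\alpha_s}\d s+(\check\beta^e/\hat\mu)N_0+\tilde M_t$, where $\tilde M_t$ collects the stochastic integrals. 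I would then show $\tilde M_t/t\to 0$ a.s.\ by the strong law of large numbers for martingales, which requires a uniform second-moment bound $\sup_{t\ge 0}\E N_t^2<\infty$; this I would establish by applying It\^o to $N_t^2$ and exploiting the dissipation against the multiplicative noise.

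Having reduced matters to the asymptotics of $t^{-1}\int_0^t\Gamma_{\alpha_s}\d s$, I would apply Lemma \ref{lm} with $\phi=\Gamma$: in the nondecreasing case, $\int_0^t\Gamma_{\alpha_s}\d s\le\int_0^t\Gamma_{\bar\alpha_s}\d s$, while in the nonincreasing case $\int_0^t\Gamma_{\alpha_s}\d s\le\int_0^t\Gamma_{\alpha^*_s}\d s$. The ergodic theorem for the genuine Markov chains $(\bar\alpha_t)$ and $(\alpha^*_t)$ then passes the right-hand side to $\sum_i\bar\pi_i\Gamma_i$ or $\sum_i\pi^*_i\Gamma_i$, and the hypotheses \eqref{l1} and \eqref{l2} force $\limsup_{t\to\infty}(\ln I_t)/t<0$ a.s., proving $I_t\to 0$ a.s. To obtain $R_t\to 0$ a.s., I would solve the linear SDE for $R_t$ by variation of parameters against the Dol\'eans--Dade exponential of $-\int_0^t\mu^e_{\alpha_s}\d B^{(1)}_s$; the exponential decay of the homogeneous part together with $I_t\to 0$ then yields $R_t\to 0$ via a stochastic Gronwall step paralleling the argument in Theorem \ref{t3.1}. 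Finally, for \eqref{t1}--\eqref{t2}, integrating the $N_t$-equation and dividing by $t$ gives $t^{-1}\int_0^t\mu_{\alpha_s}S_s\d s=t^{-1}\int_0^t\Lambda_{\alpha_s}\d s$ up to vanishing terms (which vanish because $I_t,R_t\to 0$ a.s.\ and the stochastic integral is $o(t)$); applying Lemma \ref{lm} with $\phi=\Lambda$ yields the sandwiches in \eqref{t1} and \eqref{t2}. The main obstacle throughout is the uniform second-moment estimate on $N_t$ needed to make the stochastic-integral remainders negligible on the $t$-scale; once this moment estimate is in place, the remaining steps closely mirror the blueprint of Theorems \ref{t3.1} and \ref{state}.
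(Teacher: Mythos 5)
Your overall skeleton matches the paper's proof (It\^o on $\ln I_t$, conversion of the $\mu_{\al}N$ term through the $N_t$-dynamics, Lemma \ref{lm} plus the ergodic theorem for the comparison chains, variation of constants for $R_t$, and the $N_t$-equation again for \eqref{t1}--\eqref{t2}), but the technical pillar you lean on is not available in the stated generality. You propose to kill the stochastic-integral remainders via the $L^2$ strong law of large numbers, which you say requires $\sup_{t\ge0}\E N_t^2<\infty$, to be obtained from It\^o applied to $N_t^2$ ``exploiting the dissipation against the multiplicative noise.'' That dissipation argument only works when $(\check\mu^e)^2<2\hat\mu$: the drift of $N_t^2$ is bounded by $2\check\La N_t-(2\hat\mu-(\check\mu^e)^2)N_t^2$, so for large noise intensity the second moment estimate is simply not produced by this computation, and the theorem imposes no such smallness condition on $\mu^e$. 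The paper avoids this by working with $p\in(1,(1+2\hat\mu/(\check\mu^e)^2)\wedge 2)$ only: it proves $\sup_k\E\big(\sup_{k\le s\le k+1}N_s^p\big)<\infty$ (Lemma \ref{lem1}), then controls the stochastic integrals over unit intervals by BDG, Chebyshev and Borel--Cantelli, and uses Chow's law of large numbers for martingale differences with $p$-th moments \cite{chow} instead of the $L^2$ strong law. Your route goes through only in the restricted regime $2\hat\mu>(\check\mu^e)^2$; to cover the theorem as stated you must replace the second-moment step by a $p$-th moment argument of this kind.

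A second, smaller gap is in the step $R_t\to0$. You invoke ``the exponential decay of the homogeneous part'' of the linear SDE for $R_t$, but the homogeneous factor is $\Phi_{s,t}=\exp\big(-\int_s^t\xi_{\al_u}\d u-\int_s^t\mu^e_{\al_u}\d B^{(1)}_u\big)$, and its decay in $t-s$, uniformly over the convolution variable $s$, is not automatic because of the Brownian exponent; the deterministic Gronwall-type argument of Theorem \ref{t3.1} does not transfer verbatim. The paper handles this by the law of the iterated logarithm for stochastic integrals (Wang), which yields a bound of the form $(1+\veps)\sqrt{2\langle\Lambda\rangle_{s,t}\ln\ln\langle\Lambda\rangle_{s,t}}\le c+\alpha\int_s^t\xi_{\al_u}\d u$ with $\alpha\in(0,1)$, so that the martingale part is absorbed by the deterministic decay inside the variation-of-constants formula. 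Your sketch points at the right formula but omits this uniform control, which is the actual content of that step; without it (or an equivalent device) the estimate of $\int_T^tI_s\Phi_{s,t}\d s$ does not close.
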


Before we proceed to complete the proof of Theorem \ref{sto},  we
prepare some auxiliary lemmas.

\begin{lem}\label{lem1}
Assume that $({\bf A3})$   holds. Then, \eqref{se3} and \eqref{jump}
  has a unique strong solution
$(S_t,I_t,R_t)\in\R_+^3$ for the initial value
$(s_0,i_0,r_0)\in\R^3_+$. Moreover, for any
$p\in(1,1+2\hat\mu/(\check\mu^e)^2)$ there exists a constant $C_p>0$
such that
\begin{equation}\label{f1}
\sup_{k\in\mathbb{N}}\E\Big(\sup_{k\le s\le k+1}N^p_s\Big) \le C_p.
\end{equation}
\end{lem}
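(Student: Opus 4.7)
My plan is two-pronged: (i) prove existence, uniqueness, and positivity of the strong solution by treating \eqref{se3} as an ordinary SDE on each inter-jump interval of the switching process and concatenating across jumps; (ii) prove the moment bound \eqref{f1} by deriving a closed SDE for the total population $N_t$, applying It\^o's formula to $N_t^p$, and then upgrading the resulting $L^p$-bound to a pathwise supremum bound via the Burkholder--Davis--Gundy (BDG) inequality.

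For (i), let $0=\tau_0<\tau_1<\tau_2<\cdots$ denote the jump times of $(\al_t)_{t\ge 0}$, which by $({\bf Q2})$ accumulate only at $+\infty$ almost surely. On each interval $[\tau_k,\tau_{k+1})$ the index $\al_t\equiv\al_{\tau_k}$ is frozen, so under $({\bf A3})$ the coefficients of \eqref{se3} are locally Lipschitz on $\R_+^3$ (note that $f(0)=1$ and $f'>0$ force $f\ge 1$ on $\R_+$, so the denominators stay bounded away from zero). Introducing the exit times $\sigma_n=\inf\{t\ge\tau_k:\min(S_t,I_t,R_t)\le 1/n\ \text{or}\ \max(S_t,I_t,R_t)\ge n\}$ and applying It\^o's formula to the Khasminskii-type logarithmic Lyapunov function
\[
V(x_1,x_2,x_3)=x_1-a-a\ln(x_1/a)+x_2-1-\ln x_2+x_3-1-\ln x_3,\qquad a>0,
\]
already used in Lemma~\ref{t2.1}, a direct computation (tuning $a$ so that the cross contributions $a\beta^e_j I/f(I)$ and $\beta^e_j S/f(I)$ from the It\^o correction are absorbed into the dissipative pieces $-\mu_j x_i$ coming from the linear part of $V$) shows $\mathscr{L}V\le K(1+V)$ for some constant $K$ independent of $n$. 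Gr\"onwall's inequality then yields $\sigma_n\uparrow\infty$ almost surely, so \eqref{se3} admits a unique positive strong solution up to $\tau_{k+1}$; iterating over $k$ gives the global solution in $\R_+^3$.

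For (ii), summing the three equations in \eqref{se3} the $\d B_t^{(2)}$-contributions cancel identically ($-\beta^e_{\al_t}IS/f(I)$ in $\d S_t$ versus $+\beta^e_{\al_t}SI/f(I)$ in $\d I_t$, and $R$ carries no $B^{(2)}$-noise), and the analogous cancellation of the $\beta$-drifts leaves
\[
\d N_t=\bigl(\La_{\al_t}-\mu_{\al_t}N_t-\delta_{\al_t}I_t\bigr)\d t-\mu_{\al_t}^e N_t\,\d B_t^{(1)}\le\bigl(\La_{\al_t}-\mu_{\al_t}N_t\bigr)\d t-\mu_{\al_t}^e N_t\,\d B_t^{(1)}.
\]
Applying It\^o's formula to $N_t^p$ then gives
\[
\d N_t^p\le\Bigl(p\check\La\, N_t^{p-1}-\bigl[p\hat\mu-\tfrac{p(p-1)}{2}(\check\mu^e)^2\bigr]N_t^p\Bigr)\d t-p\mu_{\al_t}^e N_t^p\,\d B_t^{(1)},
\]
where the bracketed coefficient is strictly positive precisely because $p<1+2\hat\mu/(\check\mu^e)^2$. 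Using Young's inequality on the $p\check\La N_t^{p-1}$ term reduces this to $\d N_t^p\le(A-B N_t^p)\d t-p\mu_{\al_t}^e N_t^p\,\d B_t^{(1)}$ for constants $A,B>0$, so taking expectations and invoking Gr\"onwall gives $\sup_{t\ge 0}\E N_t^p<\infty$. To pass to the pathwise supremum on $[k,k+1]$ demanded by \eqref{f1}, one applies BDG to the stochastic integral, bounds its quadratic variation $(\check\mu^e)^2\int_k^{k+1}N_s^{2p}\d s$ by $(\sup_{k\le s\le k+1}N_s^p)\cdot\int_k^{k+1}N_s^p\d s$, and then absorbs $\E\sup_{k\le s\le k+1}N_s^p$ on the left-hand side via Young's inequality, leaving only the already-bounded quantity $\sup_s\E N_s^p$ on the right.

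The main technical obstacle lies in step (i): the It\^o correction from $-\ln I$ contributes a term $\tfrac{1}{2}(\beta^e_j S/f(I))^2$ that is quadratic in $S$ and therefore not \emph{a priori} dominated by $V$ itself. The resolution is to exploit the matching linear dissipation $-\mu_j S$ present in the $x_1$ part of $V$ together with $S\le N$, or failing that to enlarge $V$ by a small quadratic term $\varepsilon(x_1^2+x_2^2+x_3^2)$, whose $\mathscr{L}$-image contains the strictly dissipative piece $-2\mu_j\varepsilon(x_1^2+x_2^2+x_3^2)$ that dominates the quadratic It\^o correction for $\varepsilon$ small enough. Once this Lyapunov estimate is secured, every remaining step in both (i) and (ii) is routine.
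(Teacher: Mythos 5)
Your part (ii) is essentially the paper's own argument verbatim: the same equation for $\d N_t$ (with the $B^{(2)}$-noise and incidence terms cancelling in the sum), It\^o's formula for $N_t^p$ with the coefficient $\mu_{\al_t}-(p-1)(\mu^e_{\al_t})^2/2$ positive precisely for $p<1+2\hat\mu/(\check\mu^e)^2$, Young plus Gronwall to get $\sup_{t\ge0}\E N_t^p<\infty$, and then BDG on $[k,k+1]$ with the bound $\int_k^{k+1}N_s^{2p}\,\d s\le\big(\sup_{k\le s\le k+1}N_s^p\big)\int_k^{k+1}N_s^p\,\d s$ and absorption of half of the supremum. No issues there.

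There is, however, a genuine gap in part (i), exactly at the point you flag as "the main technical obstacle." Your first proposed resolution (use the linear dissipation $-\mu_j S$ together with $S\le N$) cannot work: unlike Models I--II, there is no a priori pathwise bound on $N_t$ for \eqref{se3}, and a linear Lyapunov function can never dominate the quadratic correction $\tfrac12(\beta^e_j S/f(I))^2$ coming from $-\ln I$. Your fallback, enlarging $V$ by $\varepsilon(x_1^2+x_2^2+x_3^2)$, is the right idea but the wrong quadratic form: for this $V$ the $B^{(2)}$-noise, whose diffusion vector is $(-\beta^e SI/f(I),\,+\beta^e SI/f(I),\,0)$, produces the It\^o correction $\tfrac12(\beta^e SI/f(I))^2(\partial_{11}V-2\partial_{12}V+\partial_{22}V)=2\varepsilon(\beta^e)^2S^2I^2/f(I)^2$, a positive quartic term. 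Assumption $({\bf A3})$ only gives $f\ge1$, not $x/f(x)$ bounded, so this term is genuinely quartic and is not dominated by the quadratic dissipation $-2\mu_j\varepsilon(x_1^2+x_2^2+x_3^2)$ nor by the (cubic) negative incidence drift; the claimed bound $\mathscr{L}V\le K(1+V)$ fails. The paper's choice is $V(x)=(x_1+x_2+x_3)^2+x_1-a-a\ln(x_1/a)+x_2-1-\ln x_2+x_3-1-\ln x_3$, i.e.\ the square of the \emph{sum}: for this quadratic one has $\partial_{11}V-2\partial_{12}V+\partial_{22}V=0$, so the $B^{(2)}$-correction cancels identically, the incidence terms also cancel in the drift of $N$, and only $2N(\La_j-\mu_jN-\delta_jI)+(\mu^e_j)^2N^2$ survives; the remaining quadratic corrections from the logarithmic part ($\tfrac{a}{2}(\beta^e_jI/f(I))^2+\tfrac12(\beta^e_jS/f(I))^2\le C(I^2+S^2)$) are then controlled by the quadratic part of $V$, yielding $\mathscr{L}V\le K(1+V)$. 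Replacing your $\varepsilon(x_1^2+x_2^2+x_3^2)$ by $(x_1+x_2+x_3)^2$ repairs the argument and brings it in line with the paper's proof; the rest of your step (i) (freezing the index between jumps, which under $({\bf Q2})$ accumulate only at infinity, and iterating) is fine.
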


\begin{proof}
By following the argument of Lemma \ref{t2.1} and making use of the
Lyapunov function for  any $x=(x_1,x_2,x_3)\in\R^3_+$,
\begin{equation*}
V(x)=(x_1+x_2+x_3)^2+x_1-a-a\ln (x_1/a)+x_2-1-\ln x_2+x_3-1-\ln x_3
\end{equation*}
for some constant $a>0$ chosen suitably, we conclude that
\eqref{se3} and \eqref{jump} has a unique strong solution
$(S_t,I_t,R_t)\in\R^3_+$ for the initial value
$(s_0,i_0,r_0)\in\R^3_+$.

 From \eqref{se3}, it is obvious to see that
\begin{equation}\label{w9}
\d N_t=\{\La_{\al_t}-\mu_{\al_t}N_t-\delta_{\al_t}I_t\}\d
t-\mu_{\al_t}^eN_t\d B_t^{(1)},~~~t\ge0.
\end{equation}
In what follows, we fix $p\in(1,(1+2\hat\mu/(\check\mu^e)^2)]$. By
It\^o's formula, it follows that
\begin{equation}\label{w1}
\d
N_t^p=pN_t^{p-1}\{\La_{\al_t}-(\mu_{\al_t}-(p-1)(\mu_{\al_t}^e)^2/2)N_t-\delta_{\al_t}I_t\}\d
t-p\mu_{\al_t}^eN_t^p\d B_t^{(1)}.
\end{equation}
Taking advantage of $I_t\ge0$ and  Young's inequality: $a^\alpha
b^{1-\alpha}\le \alpha a+(1-\alpha)b, a,b\ge0, \alpha\in(0,1),$
yields that
\begin{equation*}
\begin{split}
\E N_t^p&\le  \E N_s^p+\int_s^t\{p\check\Lambda\E
N_u^{p-1}-(\hat\mu-(p-1)(\check\mu^e)^2/2))\E N_u^p\}\d u\\
&\le \E N_s^p+\int_s^t\{c_1-c_2\E N_u^p\}\d u,~~~~~t\ge
s\ge0,~p\in(1,1+2\hat\mu/(\check\mu^e)^2)
\end{split}
\end{equation*}
for some constants $c_1,c_2>0.$ Subsequently,  Gronwall's inequality
gives that for some $c_3>0,$
\begin{equation}\label{r2}
\sup_{t\ge0}\E N_t^p\le c_3.
\end{equation}
 Moreover, by BDG's inequality and Young's
inequality, we deduce from \eqref{w1}  that there exist constants
$c_4,c_5>0$ such that for any $t\in[k,k+1],$
\begin{equation*}
\begin{split}
\E\Big(\sup_{k\le s\le t}N^p_s\Big)&\le\E N^p_k+c_4\int_k^t\E
N^p_s\d
s+c_4\,\E\Big(\int_k^tN^{2p}_s\d s\Big)^{1/2}\\
&\le\E N^p_k+c_4\int_k^t\E N^p_s\d s+c_4\,\E\Big(\sup_{k\le
s\le t}N^p_s\int_k^tN^p_s\d s\Big)^{1/2}\\
&\le\frac{1}{2}\E\Big(\sup_{k\le s\le t}N^p_s\Big)+\E
N^p_k+c_5\int_k^t\E N^p_s\d s.
\end{split}
\end{equation*}
This, combining with \eqref{r2}, yields \eqref{f1}.
\end{proof}

\begin{lem}\label{lem}
Under the assumption $({\bf A3})$,
\begin{equation}\label{w5}
\lim_{t\rightarrow\infty}\Big(\frac{1}{t}\int_0^t\frac{\beta_{\al_s}^eS_s}{f(I_s)}\d
B_s^{(2)}\Big)=0~~~~\mbox{ a.s.}
\end{equation}
and, for $M_t:=\mu_{\al_s}N_t+\delta_{\al_s}I_t-\Lambda_{\al_t}$,
\begin{equation}\label{w6}
\lim_{t\rightarrow\infty}\Big(\frac{1}{t}\int_0^tM_s\d
s\Big)=0,~~~~~\mbox{ a.s.}
\end{equation}
\end{lem}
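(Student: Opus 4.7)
The plan is to treat the two assertions separately. For \eqref{w5}, set $g_s:=\beta_{\al_s}^e S_s/f(I_s)$ and $J_t:=\int_0^t g_s\,\d B_s^{(2)}$. Assumption $({\bf A3})$ (together with $f(0)=1$ and $f'>0$) forces $f\ge 1$ on $\R_+$, and since $S_s\le N_s$ one has $|g_s|\le \check\beta^e N_s$. Hence $J_t$ is a continuous local martingale with $\langle J\rangle_t\le (\check\beta^e)^2\int_0^t N_s^2\,\d s$, and the elementary bound $\bigl(\int_k^{k+1} g_s^2\,\d s\bigr)^{p/2}\le \sup_{k\le s\le k+1}g_s^p$ holds for every $p>0$ because the integral over a unit interval is dominated by the supremum.

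I would apply a strong law of large numbers for martingale differences. Pick $p\in(1,1+2\hat\mu/(\check\mu^e)^2)\cap(1,2]$, which is non-empty because $\hat\mu>0$. Burkholder--Davis--Gundy combined with Lemma \ref{lem1} then yields
\[
\E\,|J_{k+1}-J_k|^p \;\le\; C_p\,\E\Big(\int_k^{k+1} g_s^2\,\d s\Big)^{p/2} \;\le\; C_p(\check\beta^e)^p\,\E\sup_{k\le s\le k+1} N_s^p \;\le\; C_p',\qquad k\in\N.
\]
Since $(J_{k+1}-J_k)_{k\ge 0}$ is a martingale difference sequence with uniformly bounded $L^p$ norms for some $p\in(1,2]$, a classical martingale SLLN (e.g.\ the Chow / Hall--Heyde criterion) gives $J_n/n\to 0$ almost surely along integers. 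A Markov/Borel--Cantelli argument applied to $\sup_{k\le t\le k+1}|J_t-J_k|$ (bounded again in $L^p$ by the same BDG/Lemma \ref{lem1} estimate) fills in the intermediate times, yielding \eqref{w5}.

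For \eqref{w6}, the idea is that \eqref{w9} reads $\d N_t=-M_t\,\d t-\mu_{\al_t}^e N_t\,\d B_t^{(1)}$, so integration gives
\[
\frac{1}{t}\int_0^t M_s\,\d s \;=\; \frac{N_0-N_t}{t}-\frac{1}{t}\int_0^t\mu_{\al_s}^e N_s\,\d B_s^{(1)},
\]
and it suffices to show that each right-hand term tends to $0$ almost surely. That $N_t/t\to 0$ follows from Lemma \ref{lem1} combined with Markov's inequality and Borel--Cantelli: choosing $p$ as above, $\p(\sup_{k\le s\le k+1} N_s\ge k^{1/p+\veps})\le C_p'/k^{1+p\veps}$, hence $N_t=o(t)$ a.s. The stochastic integral is treated \emph{verbatim} as in the proof of \eqref{w5}, with $g_s$ replaced by $\mu_{\al_s}^e N_s$ and $\check\beta^e$ replaced by $\check\mu^e$.

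The main obstacle I anticipate is the exponent bookkeeping: one has to choose a single $p>1$ lying simultaneously in the admissible range $(1,1+2\hat\mu/(\check\mu^e)^2)$ of Lemma \ref{lem1} and in $(1,2]$, so that both BDG and the $L^p$ martingale SLLN apply. Because $\hat\mu>0$ this intersection is always non-empty, and no extra hypothesis on the noise intensity $\check\mu^e$ is needed beyond those already present in $({\bf A3})$ and Lemma \ref{lem1}.
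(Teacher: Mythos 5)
Your proposal is correct and follows essentially the same route as the paper: uniform $L^p$ bounds over unit intervals via BDG together with Lemma \ref{lem1}, a Chow-type martingale strong law for the integer-time skeleton, a Chebyshev/Borel--Cantelli argument for the intra-interval suprema, and then the decomposition coming from \eqref{w9} to reduce \eqref{w6} to $N_t/t\to 0$ and the vanishing of the averaged stochastic integral. The only cosmetic difference is that you invoke the unconditional $L^p$-bounded form of the martingale SLLN, whereas the paper applies Chow's criterion with conditional $p$-th moments; both are justified by the same moment estimate.
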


\begin{proof}
To derive \eqref{w5}, it suffices to verify  that
\begin{equation}\label{w2}
\lim_{k\rightarrow\infty}\Big(\frac{1}{k}\sup_{t\in[k,k+1]}\Big|\int_k^t\frac{\beta_{\al_s}^eS_s}{f(I_s)}\d
B_s^{(2)}\Big|\Big)=0~\mbox{ a.s.}
\end{equation}
and
\begin{equation}\label{w8}
\lim_{k\rightarrow\infty}\Big(\frac{1}{k}\int_0^k\frac{\beta_{\al_s}^eS_s}{f(I_s)}\d
B_s^{(2)}\Big)=0~~~~\mbox{ a.s. }
\end{equation}
Hereinafter, we stipulate
$p\in(1,(1+2\hat\mu/(\check\mu^e)^2)\wedge2)$. By BDG's inequality,
we deduce from \eqref{f1} and ({\bf A3}) that there exist  constants
$\tilde C_p,\hat C_p>0$ such that
\begin{equation}\label{w4}
\begin{split}
\E\Big(\sup_{t\in[k,k+1]}\Big|\int_k^t\frac{\beta_{\al_s}^eS_s}{f(I_s)}\d
B_s^{(2)}\Big|^p\Big)&\le\frac{\tilde
C_p(\check\beta^e)^p}{f^p(0)}\E\Big(\int_k^{k+1}S_s^2\d
s\Big)^{p/2}\\
&\le\frac{\tilde C_p(\check\beta^e)^p}{f^p(0)}\E\Big(\sup_{k\le s\le
k+1}S_s^p\Big) \le\hat C_p.
\end{split}
\end{equation}
For any $M>0$ and each integer $k\ge1$, set
\begin{equation*}
A_{k,M}:=\Big\{\frac{1}{k}\sup_{t\in[k,k+1]}\Big|\int_k^t\frac{\beta_{\al_s}^eS_s}{f(I_s)}\d
B_s^{(2)}\Big|\ge M\Big\}.
\end{equation*}
Via Chebyshev's inequality, it follows from  \eqref{w4} that
\begin{equation*}
\begin{split}
\mathbb{P}(A_{k,M}
)&\le\frac{1}{k^pM^p}\E\Big(\sup_{t\in[k,k+1]}\Big|\int_k^t\frac{\beta_{\al_s}^eS_s}{f(I_s)}\d
B_s^{(2)}\Big|^p\Big)\le\frac{\hat C_p}{k^pM^p},
\end{split}
\end{equation*}
Since the series $\sum_{k=1}^\infty1/k^p$ converges, by the
Borel--Cantelli lemma, we can conclude that
\begin{equation*}
\mathbb{P}\Big(\limsup_{k\rightarrow\infty}A_{k,M}\Big)=0.
\end{equation*}
Therefore, \eqref{w2} follows due to the arbitrariness of $M$.

Now we turn to claim that \eqref{w8} holds.  Let
\begin{equation*}
s_0=0,~~~~s_k=\int_0^k\frac{\beta_{\al_s}^eS_s}{f(I_s)}\d
B_s^{(2)},~~~x_k=s_k-s_{k-1},~~~~k\ge1.
\end{equation*}
Clearly, $\E(x_k/k)<\infty.$ On the other hand,  \eqref{w4} implies
that
\begin{equation}
\begin{split}
\sum_{k=1}^\infty\E(|x_k|^p|\F_{k-1})k^{-p}\le \hat
C_p\sum_{k=1}^\infty  k^{-p}<\infty.
\end{split}
\end{equation}
Thus, \eqref{w8} is available from \cite[Theorem 5]{chow}.

Following the arguments to derive \eqref{w2} and \eqref{w8},
respectively, we  deduce that
\begin{equation}\label{u}
\lim_{t\rightarrow\infty}\frac{N_t}{t}=0,~~~~~\mbox{
a.s.}~~~~~~\lim_{t\rightarrow\infty}\frac{1}{t}\Big(\int_0^t\mu_{\al_s}^eN_s\d
B_s^{(1)}\Big)=0,~~~~~~\mbox{ a.s.}
\end{equation}
Those, together with \eqref{w9},  yields \eqref{w6}.
\end{proof}

With Lemmas \ref{lem} and \ref{lem1} in hand, we are now in position
to finish the

\noindent{\bf Proof of Theorem \ref{sto}} For notation simplicity,
let
\begin{equation*}
I_1(t)=\frac{\check\beta^e}{\hat\mu f(0)}\cdot\frac{1}{t}\int_0^t
M_t \d t,~~~I_2(t)=- \int_0^t\mu_{\al_s}^e \d
B_s^{(1)},~~~I_3(t)=\frac{1}{t}\int_0^t\frac{\beta_{\al_s}^eS_s}{f(I_s)}\d
B_s^{(2)}.
\end{equation*}
By the It\^o formula, we find from ({\bf A3}) and $0\le S_t\le N_t$
that
\begin{equation*}
\begin{split}
\frac{1}{t}\ln
(I_t/I_0)&=\frac{1}{t}\int_0^t\Big\{\frac{\beta_{\al_s}^eS_s}{f(I_s)}-\frac{1}{2}\frac{(\beta_{\al_s}^e)^2S_s^2}{f(I_s)^2}-\Psi_{\al_s}
\Big\}\d s+\frac{I_2(t)}{t} +I_3(t)\\
&\le\frac{1}{t}\int_0^t\{\beta_{\al_s}^eS_s/f(0)-\Psi_{\al_s} \}\d
s+\frac{I_2(t)}{t} +I_3(t)\\
&\le I_1(t)+\frac{1}{t}\int_0^t\Big\{\frac{\check\beta^e}{\hat\mu
f(0)}\Lambda_{\al_s}-\Psi_{\al_s} \Big\}\d s+\frac{I_2(t)}{t}
+I_3(t).
\end{split}
\end{equation*}
By \eqref{w5} and \eqref{w6}, one has
\begin{equation}\label{l3}
\lim_{t\rightarrow\infty}(I_1(t)+I_3(t))=0,~~~~~\mbox{ a.s.}
\end{equation}
Let $\langle I_2\rangle_t$ be the quadratic variation of $I_2(t)$. A
simple calculation shows that
\begin{equation*}
\limsup_{t\rightarrow\infty}\frac{1}{t}\langle
I_2\rangle_t\le(\check\mu^e)^2
\end{equation*}
so that the strong law of large numbers for continuous martingales
gives that
\begin{equation}\label{l4}
\lim_{t\rightarrow\infty}\frac{1}{t}I_2(t)=0,~~~~~\mbox{ a.s.}
\end{equation}
Combining \eqref{l3} with \eqref{l4} and employing
$i\mapsto\Gamma_i$ is nondecreasing (resp. nonincreasing) and
\eqref{l1} (resp. \eqref{l2}), we deduce from Lemma \ref{lm} and the
strong ergodic theorem for continuous-time Markov chains that
\begin{equation*}
\lim_{t\rightarrow\infty}I_t=0~~~\mbox{ a.s. }
\end{equation*}
which implies that for any $\varepsilon\in(0,1)$ sufficiently small
there exist  $\Omega_1\subseteq\Omega$ with $\mathbb{P}(\Omega_1)=1$
and $T_1=T_1(\omega)>0$ such that
\begin{equation}\label{y1}
I_t(\omega)\le \varepsilon,~~~~~t\ge T_1,~~~~\omega\in\Omega_1.
\end{equation}
Set $ \xi_i:=\mu_i+\gamma_i+(\mu_i^e)^2/2,~i\in\S, $ and for any
$0\le s\le t$,
\begin{equation*}
\Lambda_{s,t}:=\int_s^t\mu_{\al_u}^e\d B_u^{(1)} ~~\mbox{ and
}~~\Phi_{s,t}:=\exp\Big(-\int_s^t\xi_{\al_u} \d
u-\Lambda_{s,t}\Big).
\end{equation*}
By \eqref{l4}, we have
\begin{equation*}
\lim_{t\rightarrow\infty}\frac{1}{t}\ln\Phi_{0,t}<0,~~~~\mbox{
a.s.},
\end{equation*}
which implies that there exists $\Omega_2\subseteq\Omega$ with
$\mathbb{P}(\Omega_2)=1$ and $T_2=T_2(\omega)>0$  such that
\begin{equation}\label{t4}
\Phi_{0,t}(\omega)\le \varepsilon,~~~~~t\ge
T_2,~~~~\omega\in\Omega_2.
\end{equation}
Next, using the law of the iterated logarithm for stochastic
integrals \cite[(1.2)]{Wang}, we deduce that
\begin{equation*}
\liminf_{t\rightarrow\infty}\frac{\Lambda_{s,t}}{\sqrt{2\langle
\Lambda\rangle_{s,t}\ln\ln\langle
\Lambda\rangle_{s,t}}}=-1~~~~\mbox{ a.s. }
\end{equation*}
So there exists $\Omega_3\subseteq\Omega$ with
$\mathbb{P}(\Omega_3)=1$, $T_3=T_3(\omega)>0$  such that
\begin{equation}\label{y2}
-(1+\varepsilon)\sqrt{2\langle \Lambda\rangle_{s,t}\ln\ln\langle
\Lambda\rangle_{s,t}}\le\Lambda_{s,t}\le
(-1+\varepsilon)\sqrt{2\langle \Lambda\rangle_{s,t}\ln\ln\langle
\Lambda\rangle_{s,t}},~~t,s\ge T_2,~~\varepsilon\in\Omega_3.
\end{equation}
By the variation-of-constants formula (see e.g. \cite[Theorem
3.1]{M08}), we deduce from \eqref{y1} and \eqref{y2} that for any
$\omega\in\Omega_0:=\Omega_1\cap\Omega_2\cap\Omega_3$ and $t\ge
T:=T_1+T_2+T_3$,
\begin{equation}\label{y3}
\begin{split}
R_t(\omega)&=\Phi_{0,t}(\omega) \Big\{R_0+
\int_0^t\nu_{\al_s}(\omega)
  I_s(\omega)\Phi_{0,s}^{-1}(\omega)\d s\Big\} \\
&\le R_0\,\Phi_{0,t}(\omega) +\check\nu \,\Phi_{T,t}(\omega)
\int_0^T
  I_s(\omega)\Phi_{s,T}(\omega)\d s +\check\nu\int_T^t
  I_s(\omega)\Phi_{s,t}(\omega)\d s\\
  &\le R_0\,\varepsilon+\check\nu\,\varepsilon\int_T^t
   \exp\Big(-\int_s^t\xi_{\al_u} \d
u+(1+\varepsilon)\sqrt{2\langle \Lambda\rangle_{s,t}\ln\ln\langle
\Lambda\rangle_{s,t}}\Big)\d s\\
&\quad+\check\nu \int_0^T
  I_s(\omega)\Phi_{s,T}(\omega)\d s \exp\Big(-\int_T^t\xi_{\al_u} \d
u+(1+\varepsilon)\sqrt{2\langle \Lambda\rangle_{T,t}\ln\ln\langle
\Lambda\rangle_{T,t}}\Big).
\end{split}
\end{equation}
Furthermore, observe that there exist constants $c>0$ and
$\alpha\in(0,1)$ such that
\begin{equation}\label{y5}
(1+\varepsilon)\sqrt{2\langle \Lambda\rangle_{s,t}\ln\ln\langle
\Lambda\rangle_{s,t}}\le c+\alpha\int_s^t\xi_{\al_u} \d u.
\end{equation}
Plugging \eqref{y5} into \eqref{y3} and making use of the
arbitrariness of $\varepsilon\in(0,1)$, we obtain   \eqref{t3}.

With \eqref{t3} in hand, we deduce from \eqref{w9} and \eqref{u}
that
\begin{equation*}
\lim_{t\rightarrow\infty}\Big(\frac{1}{t}\int_0^t(\Lambda_{\al_s}-\mu_{\al_s}S_s)\d
s\Big)=0
\end{equation*}
This, combining with Lemma \ref{lm}, yields the assertions
\eqref{t1} and \eqref{t2}.

\noindent{\bf Acknowledgement.} This work is supported by NNSFs of China (Nos.  11771327, 11301030, 11431014, 11831014).

\end{document}